\newtheorem{thm}{Theorem}[section]
\theoremstyle{plain}
\newtheorem{lem}[thm]{Lemma}
\newtheorem{asu}[thm]{Assumption}
\newtheorem{defn}[thm]{Definition}
\newtheorem{remark}[thm]{Remark}
\title{Asymptotic Properties of Linear Filter for Noise Free Dynamical System}
\author{Anugu Sumith Reddy}  \email{anugu.reddy@icts.res.in}
\address{International Centre for
  Theoretical Sciences - Tata Institute of Fundamental Research,
  Bangalore, India}
\author{Amit Apte} \email{apte@icts.res.in}
\address{International Centre for
  Theoretical Sciences - Tata Institute of Fundamental Research,
  Bangalore, India}
\author{Sreekar Vadlamani} \email{sreekar@tifrbng.res.in}
\address{TIFR Centre for Applied
  Mathematics, Bangalore, India}
\begin{document}

\begin{abstract}
  It is known that Kalman-Bucy filter is stable with respect to
  initial conditions under the conditions of uniform complete
  controllability and uniform complete observability
  \cite{bishop2017stability, ocone1996asymptotic}. In this paper, we
  prove the stability of Kalman-Bucy filter for the case of noise free
  dynamical system. The earlier stability results cannot be applied
  for this case, as the system is not controllable at all. We further
  show that the optimal linear filter for certain class of
  non-Gaussian initial conditions is asymptotically proximal to
  Kalman-Bucy filter. It is also shown that the filter corresponding
  to non-zero system noise in the limit of small system noise
  approaches the filter corresponding to zero system noise in the case
  of Gaussian initial conditions.
\end{abstract}

\keywords{Kalman-Bucy; Noise free; Stability; Small noise limit}

\maketitle

\section{Introduction}
\label{S1}

Since the seminal paper of Kalman and Bucy \cite{kalman1961new},
Kalman-Bucy filter is extensively studied
\cite{anderson1979optimal,jazwinski2007stochastic}. It gives the best
mean square estimate of the state at a fixed time $t$, given the
observations up to time $t$, when the dynamical system and the
observation model are linear and the initial condition is
Gaussian. Studying asymptotic properties of filters with respect to
initial conditions of the filter is an important aspect of filtering
theory, primarily to unravel certain universalities among different
filters, and to gain some understanding into the large time behaviour
of the filters. In practice, the exact initial condition of the system
is rarely known. Therefore, it is desirable that the filter be
asymptotically independent of initial condition. This property, known
as filter stability, has been studied extensively
\cite{ocone1996asymptotic, bishop2017stability}.

The classical results on stability of Kalman-Bucy filter are based on
the assumption of controllability. If the system being observed is
modelled by a deterministic process (in other words, zero system noise
in case of additive noise systems), the assumption of controllability
breaks down and the classical results are not applicable, and a new
approach is needed. This problem was first studied in
\cite{ni2016stability} while for nonlinear systems, the asymptotic
convergence of the filter estimate to the true state in the case of
zero system noise is studied in \cite{cerou2000long}.

In practice, filtering for deterministic systems is quite commonly
used in the context of atmospheric and oceanic sciences where the
problem is known as data assimilation. \cite{sarkka2013bayesian,
  asch2016data, fletcher2017data} In these applications, the
asymptotic degeneracy and stability of the filter covariance (but not
of the filter mean) for discrete time Kalman filter has been studied
recently \cite{gurumoorthy2017rank, bocquet2017degenerate} and
generalising those results to filter stability for continuous time
Kalman-Bucy filters is one of the main aim of this work.

The unifying theme of this work is to study stability of linear
filters in the following three cases: (a) Kalman-Bucy filter in the
case of zero system noise; (b) linear filter with non Gaussian initial
conditions; and (c) linear filter with small system noise case and
examine its relation with zero system noise case.

The methods used in this work are motivated from the results of Ocone
and Pardoux \cite{ocone1996asymptotic} on the stability of Kalman-Bucy
filter. However, analogous results, for the case of zero system noise,
do not follow trivially as the crucial assumption of stabilizability
becomes invalid. As mentioned earlier, Ni and Zhang
\cite{ni2016stability} studied this problem and proved the stability
of Dynamic Riccati Equation (see \eqref{DRE} below), whereas we show
(in theorems~\ref{mainth}-\ref{thm:gaussianmean}) a stronger result
that the filter initialised with incorrect initial condition converges
asymptotically to the optimal filter almost surely. We also show (in
theorem~\ref{thm:nongaussian}) that even with non-Gaussian initial
conditions, the optimal filter approaches the Kalman-Bucy filter. It
is also shown (in theorem{thm:noisy}) that, under appropriate
assumptions, the case with small system noise is asymptotically
similar to the case with zero system noise.
   
The paper is organised as follows: the main setup and statement of the
problem is introduced in Section \ref{S2}. Thereafter, in Section
\ref{S3} we study the asymptotic properties of filter in the case of
Gaussian initial conditions. The case of linear filter with
non-Gaussian initial conditions is discussed in Section \ref{S4}. In
particular, we establish that for a particular class of non-Gaussian
initial conditions, the optimal filter asymptotically approaches the
Kalman-Bucy filter. Finally, in Section \ref{S5}, it is also shown
that small noise limit of the filter corresponding to non-zero system
noise is indistinguishable to the filter corresponding to zero system
noise in the case of Gaussian initial conditions.

\section{Problem Setup}\label{S2}

Let $(\Omega, \mathcal{F},\{\mathcal{F}_t\}_{t\geq 0},\mathbb{P})$ be
a complete filtered probability space satisfying usual conditions,
\emph{i.e}, $\mathcal{F}_0$ contains all $\mathbb{P}$-null sets and
$\mathcal{F}_t$ is right continuous. We consider the following
filtering model for a linear signal process $x_t \in \mathbb{R}^{m}$,

\begin{align}\label{eq1}
  x_t&=x_{0}+\int_{0} ^t A_sx_sds,
\end{align}
with linear observation process $y_t \in \mathbb{R}^{n}$,

\begin{align} \label{obmod}
  y_t&=\int_{0} ^tC_sx_sds+\int_{0} ^t
  R^{\frac{1}{2}}_s dW_s\,,
\end{align}
where, $t\geq 0$, $A_t\in \mathbb{R}^{m\times m}$,
$C_t \in \mathbb{R}^{n\times m}$ and $R_t \in \mathbb{R}^{n\times
  n}$. Let $\mathcal{Y}_t:=\sigma(y_s:0\leq s\leq t)$ be the
$\sigma$-field generated by the observation process and $W_t$ be the
$m$-dimensional $\mathcal{F}_t$-standard Brownian motion independent
of $x_{0}$. The central theme of interest in filtering theory is
estimating $x_t$, given the observations up to time $t$, which is
calculating $\mathbb{E}[x_t|\mathcal{Y}_t]$. Since we are usually
interested in estimating functions of $x_t$, we are interested in the
calculating conditional distribution,
$\pi_t(B):=\mathbb{E}[\mathsf{1}_{ x_t\in B}|\mathcal{Y}_t]$, where
$B \in \mathbb{B}(\mathbb{R}^m)$.

We now introduce Kalman-Bucy filtering equations which play a crucial
role in the rest of the paper. These are given by,

\begin{align}\label{eq2}
  dX^{m,P}_t&=A_tX^{m,P}_tdt+P^P_t C^T _t R^{-1}_t(dy_t-C_tX^{m,P}_tdt),\;\; X^{m,P}_0=m \in \mathbb{R}^{m},\\\label{DRE}
  \dot{P}^P_t&= A_tP^P_t +P^P_tA^T _t -P^P_tC^T _t R^{-1} _t C_tP^P_t,\;\; P^P_0=P \in \mathbb {R}^{m\times m},
\end{align}
where\footnote{For real symmetric positive semi-definite matrices $X$
  and $Y$ of same dimension, we write $X \geq Y$ whenever
  $\textbf{x}^T (X-Y)\textbf{x}\geq 0, \forall\; \textbf{x}\neq
  \textbf{0} \in \mathbb{R}^m$. Notations like $X\leq Y$, $X< Y$ and
  $X> Y$ are adopted accordingly throughout the paper.} $P>0$. Note
that the superscripts in $X^{m,P}_t$ and $P^{P}_t$ refers to initial
conditions of \eqref{eq2} and \eqref{DRE}. It is well known
\cite[Theorem~9.4]{xiong2008introduction} that, for the filtering
model in \eqref{eq1} and \eqref{obmod}, if the initial condition is
Gaussian, $x_0 \sim \mathcal {N}(m_0,P_0)$, then the conditional
distribution $\pi_t$ is Gaussian,
$\pi_t \sim \mathcal {N}(\hat{X}_t,P_t)$, with mean
$\hat{X}_t:=X^{m_0,P_0}_t$ and covariance $P_t:=P^{P_0}_t$. It is
clear that the Gaussian distribution
$\bar{\pi}_t := \mathcal{N}(X^{\bar{m},\bar{P}}_t,P^{\bar{P}}_t)$,
where $X^{\bar{m},\bar{P}}_t$ and $P^{\bar{P}}_t$ are solutions of
\eqref{eq2} and \eqref{DRE} with initial conditions
$(\bar{m},\bar{P})$ different from $(m_0,P_0)$, is not the same as
$\pi_t$. The linear filter is said to be stable with respect to
initial conditions if
$(\pi _t - \bar{\pi}_t) \xrightarrow{t\to \infty} 0$ with respect to
an appropriate metric. We will discuss such stability results for both
Gaussian and non-Gaussian initial conditions.

We shall make the following assumptions throughout the paper.
\begin{asu} 
  $A_t$,$C_t$,$R_t$, $R^{-1}_t$ are all continuous and uniformly
  bounded in $t$ and $P_0$ is invertible.
\end{asu}
In order to state the next assumption, we first need the following
definition \cite{anderson1969new}.
 \begin{defn}
   A pair $[A_t,C_t]$, $A_t\in \mathbb{R}^{m\times m}$,
   $C_t \in \mathbb{R}^{n\times m}$ is said to be \emph{uniformly
     completely observable}, if there exist positive constants $\tau$,
   $\rho_1$, $\rho_2$, such that for all $t \geq 0$, we have

\begin{align}\label{obs}
  \rho_1 I_n\leq \int_{t-\tau}^{t} \Phi^{-T}_t\Phi^{T} _s C^T_s R^{-1}_s C_s\Phi_s\Phi^{-1} _t ds \leq \rho_2 I_n.
\end{align}
\end{defn}
Here, ${\Phi}_t$ is the fundamental matrix solution of \eqref{eq1},
$\emph{i.e}$, $\dot{\Phi}_t=A_t \Phi_t$ and ${\Phi}_0:=\mathbb{I}$.
Additionally, we also assume that:
\begin{asu}\label{uco}
  The pair $[A_t,C_t]$ is uniformly completely observable.
\end{asu}

\section{Asymptotic properties of filter in case of Gaussian initial
  conditions} \label{S3}
In this section, we study the asymptotic properties and stability of
the filter when the initial condition is assumed to be Gaussian. As
mentioned earlier, we are interested in calculating the distance
between measures $\pi_t$ and $\bar{\pi}_t$ under appropriate
metric. Since both $\pi _t$ and $\bar{\pi}_t$ are Gaussian, if we
choose the total variation metric, then showing the convergence of
$\|\hat{X}_t-X^{\bar{m},\bar{P}}_t \| \xrightarrow[] {t \to \infty}0$
and $\|P_t-P^{\bar{P}}_t \| \xrightarrow[] {t \to \infty}0$ is
sufficient to establish the stability of the filter for the Gaussian
initial condition. In other words, it is sufficient to prove the
stability of \eqref{DRE} which is called the Dynamic Riccati Equation
and of \eqref{eq2}. Throughout this paper, we define the norm
$\|\cdot\|$ of a $m\times n$ matrix $Q$ as
$\|Q\|:=\sup_{\|x\|=1} \|Qx \|$.

\subsection{Stability of the dynamic Riccati equation}

We begin with observing that the solution of \eqref{DRE} with a
non-negative definite initial condition $P ge 0$ can be written as

\begin{align}\label{DREsol}
  P^P_t=\Phi_t\sqrt{P}\Big(I+\sqrt{P} \bar{C}_t \sqrt{P}\Big)^{-1}\sqrt{P} \Phi^T_t,
\end{align}
where
$\bar{C}_t:= \int_{0} ^{t} {\Phi}^T _s C^T_s R^{-1} _s C_s {\Phi} _s
ds$. To investigate the stability of \eqref{DRE}, we need
the following result proved in \cite{ni2016stability} which concerns
the uniform boundedness of $P^P_t$.
\begin{lem}
  If $[A_t,C_t]$ is uniformly completely observable, $P^P_t$ is
  uniformly bounded in $t$.
\end{lem}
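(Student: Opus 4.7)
The plan is to exploit the explicit representation \eqref{DREsol} and use the uniform complete observability inequality \eqref{obs} to lower-bound the operator $\bar{C}_t$ in a way that absorbs the factors of $\Phi_t$ in \eqref{DREsol}.

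First, I would simplify \eqref{DREsol}. Since $P>0$ by assumption, the identity $\sqrt{P}(I+\sqrt{P}\bar{C}_t\sqrt{P})^{-1}\sqrt{P} = (P^{-1}+\bar{C}_t)^{-1}$ gives
\begin{align*}
  P^P_t = \Phi_t \bigl(P^{-1}+\bar{C}_t\bigr)^{-1} \Phi^T_t.
\end{align*}
Since $P^{-1}>0$, we have $P^{-1}+\bar{C}_t \geq \bar{C}_t$, hence (invertibility of $\bar{C}_t$ for large $t$ follows from the lower bound below)
\begin{align*}
  P^P_t \leq \Phi_t \bar{C}_t^{-1} \Phi_t^T.
\end{align*}

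Next, for $t\geq\tau$, I would restrict the integral defining $\bar{C}_t$ to the window $[t-\tau, t]$, and then pull the matrices $\Phi_t^{-T}$, $\Phi_t^{-1}$ out using the observability inequality \eqref{obs}. Specifically,
\begin{align*}
  \bar{C}_t \;\geq\; \int_{t-\tau}^t \Phi_s^T C_s^T R_s^{-1} C_s \Phi_s\, ds \;=\; \Phi_t^T\left(\int_{t-\tau}^t \Phi_t^{-T}\Phi_s^T C_s^T R_s^{-1} C_s \Phi_s \Phi_t^{-1}\, ds\right)\Phi_t \;\geq\; \rho_1\, \Phi_t^T\Phi_t.
\end{align*}
Inverting this order relation yields $\bar{C}_t^{-1} \leq \rho_1^{-1} \Phi_t^{-1}\Phi_t^{-T}$, and substituting back gives
\begin{align*}
  P^P_t \;\leq\; \rho_1^{-1}\, \Phi_t \Phi_t^{-1}\Phi_t^{-T}\Phi_t^T \;=\; \rho_1^{-1} I,\qquad t\geq \tau.
\end{align*}

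Finally, for $t\in[0,\tau]$, the matrix $P^P_t$ is bounded by standard continuous dependence: $\Phi_t$, $\sqrt{P}$, and $(I+\sqrt{P}\bar{C}_t\sqrt{P})^{-1}$ are all continuous in $t$ on the compact interval $[0,\tau]$, so \eqref{DREsol} is bounded there. Combining the two regimes yields a uniform bound on $\|P^P_t\|$, proving the lemma. The only non-routine step is the algebraic manipulation that conjugates the UCO inequality by $\Phi_t$ to cancel the outer factors in \eqref{DREsol}; everything else is straightforward monotone operator algebra and continuity on a compact set.
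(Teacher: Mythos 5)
Your proof is correct, but note that the paper itself gives no proof of this lemma --- it is quoted from Ni and Zhang \cite{ni2016stability} --- so there is nothing in the text to compare against line by line. Your argument is a clean, self-contained derivation directly from the representation \eqref{DREsol}: rewriting it as $P^P_t=\Phi_t(P^{-1}+\bar{C}_t)^{-1}\Phi_t^T$, dropping $P^{-1}$, restricting $\bar{C}_t$ to the window $[t-\tau,t]$, and conjugating the UCO lower bound \eqref{obs} by $\Phi_t^T$ to get $\bar{C}_t\geq\rho_1\Phi_t^T\Phi_t$, whence $P^P_t\leq\rho_1^{-1}I$ for $t\geq\tau$; continuity handles $[0,\tau]$. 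All the operator-monotonicity steps (inverse is order-reversing on positive definite matrices, congruence preserves the order) are applied correctly, and you rightly avoid invoking the UCO inequality for $t<\tau$, where the window $[t-\tau,t]$ would leave $[0,\infty)$. Two minor remarks. First, your simplification uses $P^{-1}$, hence $P>0$; this matches the paper's standing assumption (the footnote at \eqref{DRE} and the invertibility of $P_0$ in Assumption~3.1), but the sentence preceding \eqref{DREsol} and Theorem~\ref{mainth} speak of merely non-negative definite $P$. If you want the bound in that generality, set $B:=\Phi_t\sqrt{P}$ and observe
\begin{align*}
P^P_t \;=\; B\bigl(I+B^T\bar{C}'_tB\bigr)^{-1}B^T\quad\text{with }\bar{C}'_t:=\bar{C}_t,\qquad
B\bigl(I+\rho_1 B^TB\bigr)^{-1}B^T\;\leq\;\rho_1^{-1}I,
\end{align*}
the last inequality being immediate from the singular value decomposition of $B$, so no inverse of $P$ is ever needed. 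Second, the $I_n$ appearing in \eqref{obs} should be read as the $m\times m$ identity (the Gramian there is $m\times m$); this does not affect your argument.
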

\begin{remark}
  Consider the subspace of $\mathbb{R}^m$ defined by
  $S := \{ u: \|\Phi^T _t u\|\rightarrow 0 \;\; as\;\; t\rightarrow
  0\}$.  For $\textbf{v} \in S$, it is clear from \eqref{DREsol} that
  $\textbf{v}^T P^P_t \textbf{v} \rightarrow 0$ as
  $t \rightarrow \infty$ (since $\bar{C}_t$ is bounded below uniformly
  in time \cite[Proposition 3]{ni2016stability}), implying that the
  uncertainty along $S$ reduces to zero asymptotically in time. This
  feature is used in data assimilation algorithms in discrete time
  that go by the name of Assimilation in Unstable Subspace (AUS)
  \cite{carrassi2008controlling, palatella2013lyapunov,
    trevisan2011kalman}. This and other properties of the filter
  covariance (in discrete time) and their relation to Lyapunov vectors
  and exponents of the dynamics that have been discussed extensively
  in \cite{gurumoorthy2017rank, bocquet2017degenerate} extend to the
  filter covariance for the Kalman-Bucy filter (in continuous time).
\end{remark}
To prove stability of \eqref{DRE}, we consider solutions $P^P _t$ and
$P^{\bar {P}}_t$ of \eqref{DRE} corresponding to two different initial
conditions $P$ and $\bar{P}$, respectively. A straightforward
calculation shows that $E_t:= P^P_t-P^{\bar {P}}_t$ satisfies

\begin{align*}
  \dot{E}_t&= B^P _tE_t +E_t\big({B}^{\bar{P}}_t\big)^T ,
\end{align*}
where $B^P _t:=\big(A_t-P^P_tC^T _t R^{-1} _t C_t\big)$ and
${B}^{\bar{P}}_t:=\big(A_t-P^{\bar{P}}_tC^T _t R^{-1} _t
C_t\big)$. Further, it can easily be verified that

\begin{align}\label{errest}
E_t={\Psi}^P_t\big(P-\bar{P}\big)\big(\Psi ^{\bar{P}}_t\big)^T,
\end{align}
with $\dot{{\Psi}}^{P}_t=B^P _t {\Psi}^P _t$,
${\Psi}^P _0=\mathbb{I}$,
$\dot{{\Psi}}^{\bar{P}}_t=B^{\bar{P}}_t \Psi^{\bar{P}}_t$ and
$\Psi^{\bar{P}}_0=\mathbb{I}$. Therefore, stability of the Riccati
equation is related to studying the asymptotic properties of
$\Psi^{{P}}_t$ and $\Psi^{\bar{P}}_t$. Without loss of generality, it
is sufficient to study asymptotic properties of $\Psi^{{P}}_t$. To
this end, consider a linear system

\begin{align}\label{hom}
  \dot{z}_t=\big(A_t-P^P_t C_t^T R^{-1}_tC_t\big)z_t\,,
\end{align}
whose solution is given by $z_t=\Psi^{{P}}_t z_0$, where $z_0$ is the
initial condition. The above system \eqref{hom} is said to be
asymptotically stable if
$\|\Psi^{{P}}_t\|\xrightarrow[] {t \to \infty}0$ which is equivalent
to
$\|z_t\|\xrightarrow[] {t \to \infty}0, \;\forall\; z_0 \in
\mathbb{R}^m$. Therefore, to establish that
$\|\Psi^{{P}}_t\|\xrightarrow[] {t \to \infty}0$, we use Lyapunov
function approach used in \cite{bucy1967global} and show that
$\|z_t\|\xrightarrow[] {t \to \infty}0, \;\forall\; z_0 \in
\mathbb{R}^m$. The first step towards proving asymptotic stability of
\eqref{hom}, is the following lemma
\cite[Lemma~2.5.2]{sastry2011adaptive}.

\begin{lem}\label{ucoo}
  If $[A_t,C_t]$ is uniformly completely observable and $K_t$ is
  continuous and bounded in $t$, then $[A_t-K_tC_t,C_t]$ is also
  uniformly completely observable.
\end{lem}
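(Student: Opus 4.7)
The plan is to compare the observability Gramian of the original pair $[A_t,C_t]$ to that of $[A_t-K_tC_t,C_t]$ by relating their state transition matrices. Let $\tilde{\Phi}_t$ be the fundamental matrix solution of $\dot{\tilde{\Phi}}_t=(A_t-K_tC_t)\tilde{\Phi}_t$, $\tilde{\Phi}_0=\mathbb{I}$. For a fixed $t>\tau$ and $u\in\mathbb{R}^m$, define the backward trajectories $v_s:=\tilde{\Phi}_s\tilde{\Phi}_t^{-1}u$ and $w_s:=\Phi_s\Phi_t^{-1}u$ for $s\in[t-\tau,t]$, together with their outputs $\tilde{y}_s:=C_sv_s$ and $y_s:=C_sw_s$. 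In this notation, proving UCO of $[A_t-K_tC_t,C_t]$ amounts to showing that $\int_{t-\tau}^{t}\|\tilde{y}_s\|^2\,ds$ is bounded above and below by constant multiples of $\|u\|^2$, uniformly in $t$, which I would do via a variation-of-constants identity.

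The first step is to establish the integral equation linking $v_s$ to $w_s$. Computing $\frac{d}{ds}(\Phi_s^{-1}v_s)=-\Phi_s^{-1}K_sC_sv_s$ using $\dot\Phi_s=A_s\Phi_s$ and integrating from $s$ to $t$ yields
\begin{align*}
  v_s=w_s+\int_{s}^{t}\Phi_s\Phi_r^{-1}K_rC_rv_r\,dr,
\end{align*}
so that $\tilde{y}_s=y_s+\int_s^tC_s\Phi_s\Phi_r^{-1}K_r\tilde{y}_r\,dr$. Because $A_t$, $C_t$, $K_t$, $R_t$, $R_t^{-1}$ are all uniformly bounded and $|s-r|\leq\tau$, the kernel $C_s\Phi_s\Phi_r^{-1}K_r$ is uniformly bounded on the strip by some constant $M$ (via the trivial estimate $\|\Phi_s\Phi_r^{-1}\|\leq e^{\|A\|_\infty\tau}$).

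The upper bound on the new Gramian is the easier half: applying a Gr\"onwall argument to the integral equation for $v_s$ shows $\|v_s\|\leq e^{M'\tau}\|u\|$ on $[t-\tau,t]$, which immediately gives $\int_{t-\tau}^{t}\|\tilde{y}_s\|^2ds\leq\tilde{\rho}_2\|u\|^2$ after multiplying by $R_s^{-1}$ (bounded above by assumption). For the lower bound, which I expect to be the main obstacle, I would use the bound on the difference: by Cauchy--Schwarz and Fubini,
\begin{align*}
  \int_{t-\tau}^{t}\|\tilde{y}_s-y_s\|^2\,ds\leq \tau^2 M^2\int_{t-\tau}^{t}\|\tilde{y}_s\|^2\,ds.
\end{align*}
Combining this with the elementary inequality $\|y_s\|^2\leq 2\|\tilde{y}_s\|^2+2\|\tilde{y}_s-y_s\|^2$ and the lower bound $\int_{t-\tau}^{t}\|y_s\|^2\,ds\geq c\|u\|^2$ coming from the hypothesized UCO of $[A_t,C_t]$ (after absorbing the uniform bounds on $R_s^{-1}$) gives
\begin{align*}
  c\|u\|^2\leq 2(1+\tau^2M^2)\int_{t-\tau}^{t}\|\tilde{y}_s\|^2\,ds,
\end{align*}
from which the uniform lower bound $\tilde{\rho}_1\|u\|^2\leq u^T\widetilde{W}_tu$ follows.

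The structural point that makes the argument work is that all operations happen on a window of fixed length $\tau$, on which $\Phi_s\Phi_r^{-1}$ and $\tilde{\Phi}_s\tilde{\Phi}_r^{-1}$ cannot grow beyond constants depending only on the uniform bounds on $A_t$, $C_t$, $K_t$; hence the constants $\tilde{\rho}_1,\tilde{\rho}_2$ produced above are independent of $t$, yielding UCO of $[A_t-K_tC_t,C_t]$ with the same window $\tau$. The subtle step is really the lower bound, where one must trade $\|y\|^2$ for $\|\tilde{y}\|^2$ without losing all of the available coercivity, and the elementary $(a+b)^2\leq 2a^2+2b^2$ trick together with the quadratic smallness $\tau^2M^2$ of the perturbation integral is precisely what enables this trade.
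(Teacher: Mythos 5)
Your proof is correct. Note that the paper does not actually prove this lemma --- it imports it verbatim from \cite[Lemma~2.5.2]{sastry2011adaptive} --- so what you have written is a self-contained proof of a cited result, and it follows the standard ``output injection does not affect observability'' argument: relate the two transition matrices by variation of constants over the fixed window $[t-\tau,t]$, observe that the output of the perturbed system satisfies $\tilde{y}_s=y_s+\int_s^t C_s\Phi_s\Phi_r^{-1}K_r\tilde{y}_r\,dr$ with a kernel uniformly bounded on the strip, and then transfer the Gramian bounds in both directions; the handling of the $R_s^{-1}$ weight via its uniform upper and lower bounds is also fine. Two small remarks. First, in the Gr\"onwall step the bound should read $\|v_s\|\leq e^{\|A\|_\infty\tau}e^{M'\tau}\|u\|$ (or with $M'$ redefined to absorb $\sup_{s}\|w_s\|\le e^{\|A\|_\infty\tau}\|u\|$), since $w_s$ is not bounded by $\|u\|$ alone; this only changes the constant $\tilde{\rho}_2$. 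Second, your closing comment about the ``quadratic smallness'' of $\tau^2M^2$ is a mischaracterization of your own argument: the inequality $c\|u\|^2\leq 2(1+\tau^2M^2)\int_{t-\tau}^t\|\tilde{y}_s\|^2\,ds$ yields a uniform positive lower bound for every value of $\tau M$, so no smallness is needed --- what makes the trade work is simply that the perturbation term is controlled by the same quantity $\int_{t-\tau}^t\|\tilde{y}_s\|^2\,ds$ you are trying to bound from below.
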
  
Consequently, since $P^P_tC^T _t R^{-1} _t$ is continuous and bounded
in $t$, $[B^P_t, C_t]$ is uniformly completely observable, i.e., there
exist $\tilde{\tau},\rho_3,\rho_4>0$ such that for all $t>0$ we have,

\begin{align}\label{obmean}
  \rho_3 I_n\leq \int_{t-\tilde{\tau}}^{t}  \big(\Psi^P_t\big)^{-T} \big(\Psi^P_s\big)^T C^T_s R^{-1}_s C_s\Psi^P_s \big(\Psi^P_t\big)^{-1} ds \leq \rho_4 I_n\,.
\end{align} 
We shall now state one of the main results of this paper, that of
asymptotic stability of the filter covariance.
\begin{thm}\label{mainth}
  Let $P$ be non-negative definite, and $[A_t,C_t]$ be uniformly
  completely observable, then \eqref{hom} is asymptotically stable and

\begin{align}\label{eqn:Psi-decay}
  \int_0 ^{\infty} \big(\Psi^P_s \big)^T\Psi^P_sds < \frac{\tilde{\tau} P^{-1}}{\rho_3}
\end{align}
\end{thm}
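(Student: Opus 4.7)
The plan is to apply a Lyapunov function argument in the spirit of \cite{bucy1967global}. Motivated by the ``information form'' of the Riccati equation, I would consider the quadratic form
\[
V(z,t) := z^T (P^P_t)^{-1} z,
\]
which is well-defined whenever $P>0$ by the representation \eqref{DREsol}; the general non-negative definite case can be recovered by perturbing $P \mapsto P + \varepsilon \mathbb{I}$ and passing to $\varepsilon \to 0$.

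First, I would derive the ODE for $Q_t := (P^P_t)^{-1}$ by differentiating $Q_t P^P_t = \mathbb{I}$ and substituting \eqref{DRE}, obtaining $\dot Q_t = -Q_t A_t - A_t^T Q_t + C_t^T R_t^{-1} C_t$. Differentiating $V(z_t,t)$ along a trajectory of \eqref{hom} and repeatedly using $Q_t P^P_t = \mathbb{I}$, the cross terms $(B^P_t)^T Q_t + Q_t B^P_t$ combine with $\dot Q_t$ so that most terms cancel, leaving
\[
\frac{d}{dt} V(z_t,t) = -z_t^T C_t^T R_t^{-1} C_t z_t \le 0.
\]
Integrating and using $V \ge 0$ yields the key integrability estimate
\[
\int_0^\infty z_s^T C_s^T R_s^{-1} C_s z_s\, ds \le V(z_0,0) = z_0^T P^{-1} z_0.
\]

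To deduce asymptotic stability of \eqref{hom}, I would feed this integrability into the UCO bound \eqref{obmean} for $[B^P_t, C_t]$. With $z_s = \Psi^P_s z_0$, sandwiching \eqref{obmean} between $(\Psi^P_t)^T$ and $\Psi^P_t$ yields, for $t \ge \tilde\tau$,
\[
\rho_3 \|z_t\|^2 \le \int_{t-\tilde\tau}^t z_s^T C_s^T R_s^{-1} C_s z_s\, ds,
\]
and the right-hand side is a tail of a globally convergent integral, hence vanishes as $t \to \infty$. Thus $\|\Psi^P_t z_0\| \to 0$ for every $z_0$, i.e., $\|\Psi^P_t\| \to 0$.

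For the matrix estimate \eqref{eqn:Psi-decay}, I would promote the above to a matrix inequality $\rho_3 (\Psi^P_t)^T \Psi^P_t \le \int_{t-\tilde\tau}^t (\Psi^P_s)^T C_s^T R_s^{-1} C_s \Psi^P_s\, ds$, integrate in $t$, and apply Fubini; each $s$ enters a $t$-window of length at most $\tilde\tau$, so that
\[
\rho_3 \int_0^\infty (\Psi^P_s)^T \Psi^P_s\, ds \le \tilde\tau \int_0^\infty (\Psi^P_s)^T C_s^T R_s^{-1} C_s \Psi^P_s\, ds \le \tilde\tau P^{-1},
\]
the last step being the matrix version of the integrability estimate obtained above.

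The main technical step is the algebraic identity giving $\dot V = -z^T C^T R^{-1} C z$, which hinges on a delicate cancellation between $\dot Q_t$ and $(B^P_t)^T Q_t + Q_t B^P_t$; once this is in place, the rest is a clean interplay between integrability of the dissipation and UCO. A minor subtlety is the initial window $[0,\tilde\tau]$, where \eqref{obmean} does not directly yield the lower bound, but the contribution of this compact window to $\int_0^\infty (\Psi^P_s)^T \Psi^P_s\, ds$ is finite by continuity of $\Psi^P_s$ and can be absorbed without affecting the form of the estimate.
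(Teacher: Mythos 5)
Your proposal follows essentially the same route as the paper: the Lyapunov function $V(z,t)=z^T\big(P^P_t\big)^{-1}z$, the dissipation identity $\frac{d}{dt}V(z_t,t)=-z_t^TC_t^TR_t^{-1}C_tz_t$ (your cancellation computation is correct), and the uniform complete observability of $[B^P_t,C_t]$ are exactly the ingredients used there, and your tail argument for $\|z_t\|\to 0$ is a clean equivalent of the paper's monotonicity argument. The one step that does not deliver the stated bound is the final bookkeeping. Because you invoke the \emph{backward}-looking window, your Fubini step only controls $\int_{\tilde\tau}^{\infty}\big(\Psi^P_s\big)^T\Psi^P_s\,ds\le\frac{\tilde\tau}{\rho_3}P^{-1}$; the segment $\int_0^{\tilde\tau}\big(\Psi^P_s\big)^T\Psi^P_s\,ds$ is a strictly positive additional contribution, so it cannot be ``absorbed without affecting the form of the estimate'' --- adding it would degrade the constant $\tilde\tau P^{-1}/\rho_3$ that \eqref{eqn:Psi-decay} asserts. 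The fix is to use the \emph{forward}-looking window: integrating the dissipation identity over $[t,t+\tilde\tau]$ gives
\begin{align*}
V(z_t,t)-V(z_{t+\tilde\tau},t+\tilde\tau)=\int_t^{t+\tilde\tau}z_s^TC_s^TR_s^{-1}C_sz_s\,ds\ \ge\ \rho_3\|z_t\|^2\qquad\text{for every }t\ge 0,
\end{align*}
where the lower bound is the observability Gramian of $[B^P_t,C_t]$ normalized at the \emph{left} endpoint of the window (equivalent to \eqref{obmean} up to the value of the constants, since $B^P_t$ is uniformly bounded). Integrating this in $t$ over $[0,\infty)$ and applying your Fubini argument (each $s$ now lies in a $t$-window of length at most $\tilde\tau$) yields $\rho_3\int_0^\infty\|z_t\|^2\,dt\le\tilde\tau\int_0^\infty z_s^TC_s^TR_s^{-1}C_sz_s\,ds\le\tilde\tau\,z_0^TP^{-1}z_0$, with no leftover initial segment. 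This is precisely what the paper does in discrete form: it telescopes $V(z_{t'+(k+1)\tilde\tau})-V(z_{t'+k\tilde\tau})\le-\rho_3\|z_{t'+k\tilde\tau}\|^2$ over $k\ge 0$ and then integrates the offset $t'$ over $[0,\tilde\tau]$, which covers all of $[0,\infty)$. The rest of your proposal, including the perturbation $P\mapsto P+\varepsilon\mathbb{I}$ for singular $P$ (noting that the bound \eqref{eqn:Psi-decay} itself only makes sense for $P>0$), is sound.
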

\begin{remark}
  We note here that the asymptotic stability of \eqref{hom} has
  already been proven in \cite{ni2016stability} using Lyapunov
  function, wherein it is shown that
  $\|z_t\|\xrightarrow[] {t \to \infty}0$, which in turn implies the
  stability of the Riccati equation. However, our result above is
  stronger since \eqref{eqn:Psi-decay} gives certain control over the
  rate of decay of $\Psi^P_s$, which is needed later to prove almost
  sure convergence of the filter mean.
\end{remark} 
\begin{proof} 
  Like in \cite{ni2016stability}, we begin with a Lyapunov function
  
\begin{align}\label{lyap}
    V(z_t,t):= z_t ^T \big(P^P_t\big)^{-1}z_t.
  \end{align}
  Using \eqref{DRE} and \eqref{hom}, we see that

\begin{align}\nonumber
  \frac{dV}{dt}(z_t,t)&=-z_t^T(A_t-{P^P}_tC^T _tR^{-1}_t C_t)^T {\big(P^P_t\big)}^{-1} z_t\\\nonumber &+z^T_t(-A^T_t{\big(P^P_t\big)}^{-1}-{\big(P^P_t\big)}^{-1}A_t+C^T_t R^{-1} _t C_t)z_t\\\nonumber &+z^T_t{\big(P^P_t\big)}^{-1}(A_t-{P^P}_tC^T _tR^{-1}_t C_t)z_t\\ \label{lyapineq}
&=-z_t^T C^T_t R^{-1} _t C_tz_t \leq 0 \,, \qquad \forall t> 0 \,.
\end{align}
Using the relationship $z_s=\Psi^P_s \big(\Psi^P\big)^{-1}_t z_t$,
we can write

\begin{align}\nonumber
  V(z_{t+\tilde{\tau}},t+\tilde{\tau})-V(z_t,t)&=-z_t^T\int_{t}^{t+\tilde{\tau}} \big(\Psi^P_t \big)^{-T}  \big(\Psi^P_s\big)^TC^T_s R^{-1} _s C_s\Psi^P_s\big(\Psi^P _t\big)^{-1}  ds\: z_t\,.
\end{align} 
Observe that from \eqref{obmean},

\begin{align}\label{meanin}
  \rho_3\|z_t\|^2\leq V(z_t,t)- V(z_{t+\tilde{\tau}},t+\tilde{\tau})&\leq\rho_4\|z_t\|^2,
\end{align}
which together with the assumption of uniform complete observability
of $[A_t,C_t]$ imply that $V(z_t,t)\rightarrow 0$, and
$\|z_t\| \rightarrow 0$, as $t\rightarrow \infty$, and that
\eqref{hom} is asymptotically stable.

Next, in order to prove \eqref{eqn:Psi-decay}, observe that writing
$t=t' +k\tilde{\tau},$ for some $t'\in[0, \tilde{\tau}]$, we have

\begin{align*}
  V(z_{t'+(k+1)\tilde{\tau}},t'+(k+1)\tilde{\tau})-V(z_{t+k\tilde{\tau}},t' +k\tilde{\tau})&\leq-\rho_3\|z_{t' +k\tilde{\tau}}\|^2
\end{align*}
Adding $N$ such inequalities with $k=0,1,2,...,N$, we have

\begin{align*}
  V(z_{t'+(N+1)\tilde{\tau}},t'+(N+1)\tilde{\tau})-V(z_{t'},t')&\leq-\rho_3\sum_{k=0}^{N}\|z_{t' +k\tilde{\tau}}\|^2
\end{align*}
Using \eqref{lyapineq}, and letting $N\rightarrow \infty$,

\begin{align}\label{sum}
  \sum_{k=0}^{\infty}\|z_{t' +k\tilde{\tau}}\|^2 \leq \frac{V(z_{t'},t')}{\rho_3} \leq \frac{V(z_{0},0)}{\rho_3} \,.
\end{align}
Integrating \eqref{sum} with respect to $t'$ in the range
$t'\in[0,\tilde{\tau}]$, we have

\begin{align}\nonumber
  \int_{0}^{\tilde{\tau}}\sum_{k=0}^{\infty}\|z_{t' +k\tilde{\tau}}\|^2dt' &\leq \int_{0}^{\tilde{\tau}}\frac{V(z_{0},0)}{\rho_3}dt'\\\nonumber
  \int_{0}^{\infty}\|z_{t'}\|^2dt' &\leq \frac{V(z_{0},0)\tilde{\tau}}{\rho_3}\\ 
  z_{0}^T\Big(\int_{0}^{\infty}\big(\Psi^P_{t'}\big)^T\Psi^P_{t'}dt'\Big)z_{0} &\leq \frac{V(z_{0},0)\tilde{\tau}}{\rho_3}= \frac{\tilde{\tau}z^T _{0}P^{-1} z_{0}}{\rho_3}
\label{bou}
\end{align}
Since \eqref{bou} is true for all initial conditions $z_{0}$,

\begin{align}\label{integrability}
  \int_{0}^{\infty}\big(\Psi^P _{t'}\big)^T\Psi^P_{t'}dt'&\leq  \frac{\tilde{\tau}}{\rho_3}P^{-1}
\end{align}
which completes the proof.
\end{proof}

\subsection{Almost sure convergence of the conditional expectation}

To discuss the convergence of conditional expectation, we follow the
method set forth in \cite{ocone1996asymptotic}. Consider two solutions
$(\hat{X} _t,P_t)$, $(X^{\bar{m},\bar{P}} _t, P^{\bar{P}}_t)$ of
\eqref{eq2} and \eqref{DRE} with different initial conditions: one
correct, $m_0,P_0$ (which are the mean and covariance of the Gaussian
$x_0$) and the other incorrect, $\bar{m},\bar{P}$, respectively. Our
result concerning the asymptotic stability of the filter mean is as
follows:
\begin{thm}
  Let $P_0, \bar{P}$ be some bounded non-negative definite matrices,
  and $[A_t,C_t]$ be uniformly completely observable, then
  $\|\hat{X} _t- X^{\bar{m},\bar{P}} _t \| \xrightarrow[]{t\to \infty}
  0 \;\; \mathbb{P}-a.s$
\label{thm:gaussianmean} \end{thm}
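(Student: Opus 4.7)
The plan is to derive an SDE for the error $e_t := \hat{X}_t - X^{\bar{m},\bar{P}}_t$, solve it by variation of constants, and extract almost sure convergence from the resulting martingale. Writing both filter equations in terms of the innovation process $\nu_t := y_t - \int_0^t C_s \hat{X}_s\,ds$ of the correctly initialised filter (which is a $\mathcal{Y}_t$-Brownian motion with quadratic variation $\int_0^t R_s\,ds$), a direct substitution gives
\begin{align*}
de_t = B^{\bar{P}}_t e_t\,dt + H_t\,d\nu_t, \qquad e_0 = m_0 - \bar{m},
\end{align*}
where $H_t := (P_t - P^{\bar{P}}_t) C_t^T R_t^{-1}$. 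Variation of constants yields $e_t = \Psi^{\bar{P}}_t(e_0 + M_t)$ with $M_t := \int_0^t (\Psi^{\bar{P}}_s)^{-1} H_s\,d\nu_s$ a continuous $\mathcal{Y}_t$-martingale whose quadratic variation is deterministic. Since $\|\Psi^{\bar{P}}_t\|\to 0$ by Theorem~\ref{mainth}, it suffices to show $\langle M\rangle_\infty<\infty$, for then $M_t$ converges almost surely to a finite limit by the $L^2$-martingale convergence theorem.

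The main obstacle is that $(\Psi^{\bar{P}}_s)^{-1}$ may grow unboundedly in $s$, so a crude estimate on $\langle M\rangle_\infty$ is hopeless. To circumvent this, I would exploit the explicit factorisation \eqref{errest}, $P_t - P^{\bar{P}}_t = \Psi^{P_0}_t(P_0-\bar{P})(\Psi^{\bar{P}}_t)^T$, which pairs every inverse $(\Psi^{\bar{P}}_s)^{-1}$ with a matching $\Psi^{P_0}_s$. Setting $Z_s := (\Psi^{\bar{P}}_s)^{-1}\Psi^{P_0}_s$, differentiation together with $\tfrac{d}{ds}(\Psi^{\bar{P}}_s)^{-1} = -(\Psi^{\bar{P}}_s)^{-1} B^{\bar{P}}_s$ yields the linear ODE
\begin{align*}
\dot{Z}_s = -Z_s G_s, \qquad Z_0 = \mathbb{I},
\end{align*}
where $G_s := (P_0 - \bar{P})(\Psi^{\bar{P}}_s)^T C_s^T R_s^{-1} C_s \Psi^{P_0}_s$. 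Using the uniform boundedness of $C_t$ and $R_t^{-1}$ together with $2ab\le a^2+b^2$, one obtains $\|G_s\|\le K\bigl(\|\Psi^{\bar{P}}_s\|^2 + \|\Psi^{P_0}_s\|^2\bigr)$, which is integrable on $[0,\infty)$ by two applications of Theorem~\ref{mainth}. Gronwall's inequality then gives $\sup_{s\ge 0}\|Z_s\|<\infty$.

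With $Z_s$ uniformly bounded, the integrand of $\langle M\rangle_\infty$ reduces to $Z_s(P_0-\bar{P})(\Psi^{\bar{P}}_s)^T C_s^T R_s^{-1} C_s \Psi^{\bar{P}}_s (P_0-\bar{P})^T Z_s^T$, whose trace is dominated by a constant multiple of $\|\Psi^{\bar{P}}_s\|^2$. Integrating and invoking the decay estimate \eqref{eqn:Psi-decay} one last time gives $\operatorname{tr}\langle M\rangle_\infty<\infty$. Therefore $M_t$ converges almost surely to a finite limit $M_\infty$, and the inequality $\|e_t\|\le\|\Psi^{\bar{P}}_t\|\,\|e_0 + M_t\|$ together with $\|\Psi^{\bar{P}}_t\|\to 0$ delivers $\|e_t\|\to 0$ $\mathbb{P}$-a.s.
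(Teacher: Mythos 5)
Your proof is correct and follows the same skeleton as the paper's: the innovation SDE for the error, variation of constants giving $e_t=\Psi^{\bar P}_t(e_0+M_t)$, an $L^2$ bound on the martingale $M_t$ obtained from Theorem~\ref{mainth}, and then martingale convergence combined with $\|\Psi^{\bar P}_t\|\to 0$. The one place where you diverge is in taming the potentially unbounded factor $(\Psi^{\bar P}_s)^{-1}$ inside $M_t$. The paper exploits the fact that the symmetric matrix $E_s=P_s-P^{\bar P}_s$ admits, besides \eqref{errest}, the transposed factorisation $E_s=\Psi^{\bar P}_s(P_0-\bar P)\big(\Psi^{P_0}_s\big)^T$ (swap the roles of $P_0$ and $\bar P$ and use symmetry of $E_s$), so that $(\Psi^{\bar P}_s)^{-1}E_s=(P_0-\bar P)\big(\Psi^{P_0}_s\big)^T$ cancels exactly and the quadratic variation is controlled by $\int_0^\infty\big(\Psi^{P_0}_s\big)^T\Psi^{P_0}_s\,ds$ alone. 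You instead keep \eqref{errest} as written and prove that $Z_s=(\Psi^{\bar P}_s)^{-1}\Psi^{P_0}_s$ is uniformly bounded via the ODE $\dot Z_s=-Z_sG_s$ and Gronwall, whose input is the integrability of $\|\Psi^{P_0}_s\|^2+\|\Psi^{\bar P}_s\|^2$, i.e.\ \eqref{eqn:Psi-decay} applied twice. Both routes are sound; the paper's cancellation is a one-line observation once the dual factorisation is noticed, while your Gronwall estimate is more work but does not require noticing it, and yields as a byproduct the uniform comparability of $\Psi^{P_0}_s$ and $\Psi^{\bar P}_s$. One caveat you share with the paper: using \eqref{eqn:Psi-decay} (for $P_0$, and in your case also for $\bar P$) implicitly requires the initial covariances to be invertible rather than merely non-negative definite as stated in the theorem; this is an imprecision in the hypothesis, not a gap specific to your argument.
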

\begin{proof}
  Let us begin with defining the innovations process
  \[ d\nu_t:= dy_t-C_t\hat{X}_tdt, \] which is a
  $\mathcal{Y}_t$-Brownian motion \cite{xiong2008introduction}. Then,
  the using \eqref{eq2}, we see that the dynamical equation for
  $\hat{X} - X^{\bar{m},\bar{P}}$ is
  
\begin{align}
    d(\hat{X} _t-X^{\bar{m},\bar{P}} _t)&=\big(A_t-P^{\bar{P}}_t C^T _t R^{-1}_t C_t\big)(\hat{X} _t-X^{\bar{m},\bar{P}} _t)dt+\big(P_t-P^{\bar{P}}_t\big)C^T_t R^{-1}_t (dy_t-C_t\hat{X}_tdt) \,. \label{err}
  \end{align}
  Using a simple application of Ito's formula, we observe that
  solution to he above dynamical equation is given by
  
\begin{align*}
    \big(\hat{X}-X^{\bar{m},\bar{P}}\big)_t=\Psi^{\bar{P}}_t(m_0-\bar{m})+\int_0 ^{t} \Psi^{\bar{P}}_t \big(\Psi^{\bar{P}}_s\big)^{-1}\big(P_s-P^{\bar{P}}_s\big)C^T_s R^{-1}_s d\nu_s
  \end{align*}
  Next, writing
  $\hat{Z}_t:= \int_0 ^{t}
  \big(\Psi^{\bar{P}}_s\big)^{-1}\big(P_s-P^{\bar{P}}_s\big)C^T_s
  R^{-1}_s d\nu_s$, we can express the above solution in a compact
  form as
  
\begin{align}\label{eqn:mean-decomposition}
    \big(\hat{X}-X^{\bar{m},\bar{P}}\big)_t=\Psi^{\bar{P}}_t(m_0-\bar{m})+ \Psi^{\bar{P}}_t \hat{Z}_t. 
  \end{align}
  Observe now that using \eqref{errest} to write $(P_s-P^{\bar{P}}_s)$
  in terms of $\Psi^{P_0}_s$ and $\Psi ^{\bar{P}}_s$, it is clear
  that,
  
\begin{align}
    \mathbb{E}[|\hat{Z}_t|^2]&=\mathbb{E}\left[\textrm{tr}\Big(\int_0 ^t (P_0-\bar{P})\big(\Psi^{P_0}_s\big)^T C^T_s R^{-1}_sR^{-1}_sC_s
                               \Psi^{{P}_0}_s(P_0-\bar{P})ds\Big)\right],
  \end{align}
  where $\textrm{tr}(A)$ denotes the trace of the square matrix $A$.
  Using simple algebra, we can easily conclude that for some $M'>0$,
  we have $(P_0-\bar{P})^2<M' I$. In particular, we could choose $M'$
  to be the squared sum of the largest eigenvalues of $P_0$ and
  $\bar{P}$. Moreover, we also have $\|C^T _t R^{-2} _t C_t\| <M$, for
  some $M>0$, thus implying
  
\begin{align*}
    \textrm{tr}\Big(\int_0 ^t (P_0-\bar{P})\big(\Psi^{P_0}_s\big)^TC^T_s R^{-1}_sR^{-1}_sC_s
    \Psi^{P_0}_s(P_0-\bar{P})ds\Big)&\leq M M' \textrm{tr}\Big(\int_0 ^t \big(\Psi^{P_0}_s\big)^T
                                      \Psi^{P_0}_sds\Big)\\
                                    &\leq M M' \textrm{tr}\Big(\int_0 ^\infty \big(\Psi^{P_0}_s\big)^T
                                      \Psi^{P_0}_sds\Big)\\
                                    &< \infty, 
  \end{align*}
  where the last inequality follows from Theorem \eqref{mainth},
  indicating that $\hat{Z}_t$ is a square integrable
  martingale. Therefore, by martingale convergence theorem,
  $\{\hat{Z}_t\}_{t\ge 0}$ converges almost surely, as $t\to\infty$,
  to an integrable random variable, say $N$. Thus, we conclude that
  $ \Psi^{\bar{P}}_t \hat{Z}_t \rightarrow 0 \;\;\mathbb{P}-a.s$,
  since we already know by Theorem \eqref{mainth} that
  $\Psi^{\bar{P}}_t$ converges to zero as $t\to\infty$. Similarly, we
  can deduce that $\Psi^{\bar{P}}_t(m_0 - \bar{m}) \to 0$, as
  $t\to\infty$, which in view of \eqref{eqn:mean-decomposition}
  completes the proof.
\end{proof}

\section{Linear filter with non-Gaussian initial condition}\label{S4}

In this section, we will consider the filter stability in the case of
non-Gaussian initial conditions. Note that if $x_0$ is not Gaussian,
then $\pi _t $ is not Gaussian either. But the following theorem shows
that, under certain conditions, the linear filter even with
non-Gaussian initial condition is asymptotically close to an
appropriate Kalman-Bucy filter almost surely. To state the theorem
below, we recall that $X^{\bar{m},\bar{P}}_t,P^{\bar{P}}_t$ denote the
solutions of \eqref{eq2} and \eqref{DRE} with initial conditions
$\bar{m}$ and $\bar{P}$.
\begin{thm}
  Suppose the pair $[A_t,C_t]$ is uniformly completely observable. Let
  $x_0$ be square integrable and be of the form
  $x_{0} :=v_0+\bar{x}_0$, where $\bar{x}_0$ is a non-degenerate
  Gaussian random variable independent of $v_0$. Then for the system
  given by \eqref{eq1} and \eqref{obmod}, the filter mean
  $\mathbb{E}[x_t|\mathcal{Y}_t]$ is almost surely asymptotically
  proximal to the filter mean $X^{\bar{m},\bar{P}}_t$ :

  \begin{align}\label{1s}
    \mathbb{E}[x_t|\mathcal{Y}_t]-X^{\bar{m},\bar{P}}_t\xrightarrow[]{t \to \infty} 0,\;\; \mathbb{P}-a.s.
  \end{align}
  We also have the almost sure weak asymptotic proximality (or
  merging, following the terminology of \cite{daristotile1988merging})
  of the filtering distributions $\pi_t$ with the Gaussian
  distributions defined by solutions of Kalman-Bucy equations:
  
  \begin{align}\label{2s}
    \pi_t(g)-\mathcal{N}(X^{\bar{m},\bar{P}}_t,P^{\bar{P}}_t)(g)\xrightarrow[] {t \to \infty} 0,\;\; \mathbb{P}-a.s, 
  \end{align}
  for any bounded, uniformly continuous $g$, for any
  $\bar{m}\in \mathbb{R}^m$ and $\bar{P} \in \mathbb{R}^{m \times m}$,
  $\bar{P}>0$.
\label{thm:nongaussian} \end{thm}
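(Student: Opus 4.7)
The plan is to exploit the decomposition $x_0 = v_0 + \bar{x}_0$ by \emph{conditioning on $v_0$}, reducing the problem to the Gaussian case already handled by Theorems~\ref{mainth} and~\ref{thm:gaussianmean}. Write $\bar{x}_0\sim\mathcal{N}(m_*,\Sigma)$ with $\Sigma>0$. Conditional on $v_0$, the signal starts from a Gaussian $\mathcal{N}(v_0+m_*,\Sigma)$ and the observation noise $W$ is still independent of the initial state, so classical Kalman--Bucy theory applied under the conditional law gives $\mathbb{E}[x_t\mid v_0,\mathcal{Y}_t] = X^{v_0+m_*,\Sigma}_t$. Crucially, linearity of \eqref{eq2} in its initial mean yields $X^{v_0+m_*,\Sigma}_t = X^{m_*,\Sigma}_t + \Psi^\Sigma_t v_0$, with $\Psi^\Sigma_t$ deterministic.

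For~\eqref{1s}, taking $\mathbb{E}[\,\cdot\mid\mathcal{Y}_t]$ of this identity and using the tower property together with the $\mathcal{Y}_t$-measurability of $X^{m_*,\Sigma}_t$ gives
\begin{equation*}
\mathbb{E}[x_t\mid\mathcal{Y}_t] \;=\; X^{m_*,\Sigma}_t \;+\; \Psi^\Sigma_t\,\mathbb{E}[v_0\mid\mathcal{Y}_t].
\end{equation*}
Square-integrability of $v_0$ makes $\{\mathbb{E}[v_0\mid\mathcal{Y}_t]\}_{t\ge 0}$ an $L^2$-bounded martingale; it therefore converges and is a.s.\ bounded in $t$, which combined with $\|\Psi^\Sigma_t\|\to 0$ from Theorem~\ref{mainth} forces $\mathbb{E}[x_t\mid\mathcal{Y}_t] - X^{m_*,\Sigma}_t\to 0$ a.s. Separately, Theorem~\ref{thm:gaussianmean} applied under the conditional law given $v_0$ yields $X^{v_0+m_*,\Sigma}_t - X^{\bar{m},\bar{P}}_t\to 0$ a.s., and since $\Psi^\Sigma_t v_0\to 0$ a.s.\ this becomes $X^{m_*,\Sigma}_t - X^{\bar{m},\bar{P}}_t\to 0$ a.s. Combining the two limits proves~\eqref{1s}.

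For~\eqref{2s}, conditional on $v_0$ the filter distribution is $\pi^{v_0}_t = \mathcal{N}(X^{v_0+m_*,\Sigma}_t,\, P^\Sigma_t)$, so with $\Phi_g(m,P):=\int g\,d\mathcal{N}(m,P)$ one has $\pi_t(g) = \mathbb{E}[\Phi_g(X^{m_*,\Sigma}_t+\Psi^\Sigma_t v_0,\, P^\Sigma_t)\mid\mathcal{Y}_t]$. I split
\begin{equation*}
\pi_t(g) - \mathcal{N}(X^{\bar{m},\bar{P}}_t, P^{\bar{P}}_t)(g) \;=\; A_t + B_t,
\end{equation*}
with $A_t := \pi_t(g) - \Phi_g(X^{m_*,\Sigma}_t, P^\Sigma_t)$ and $B_t:= \Phi_g(X^{m_*,\Sigma}_t, P^\Sigma_t) - \Phi_g(X^{\bar{m},\bar{P}}_t, P^{\bar{P}}_t)$. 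The term $B_t\to 0$ a.s., because $\Phi_g$ is jointly continuous in $(m,P)$ for bounded uniformly continuous $g$, and the means converge by~\eqref{1s} while the covariances converge by~\eqref{errest} combined with Theorem~\ref{mainth}. For $A_t$, letting $\omega_g$ be the modulus of continuity of $g$,
\begin{equation*}
|A_t| \;\le\; \mathbb{E}\bigl[\omega_g(\|\Psi^\Sigma_t\|\,\|v_0\|)\,\bigm|\,\mathcal{Y}_t\bigr],
\end{equation*}
and for any $\epsilon>0$, choosing $\delta>0$ with $\omega_g\le\epsilon$ on $[0,\delta]$ and using $\omega_g\le 2\|g\|_\infty$, the conditional Markov inequality gives
\begin{equation*}
|A_t| \;\le\; \epsilon + \frac{2\|g\|_\infty\,\|\Psi^\Sigma_t\|}{\delta}\,\mathbb{E}[\|v_0\|\mid\mathcal{Y}_t].
\end{equation*}
Since $\mathbb{E}[\|v_0\|\mid\mathcal{Y}_t]$ is a convergent (hence a.s.\ bounded) martingale and $\|\Psi^\Sigma_t\|\to 0$, one gets $\limsup_t|A_t|\le\epsilon$ a.s.; letting $\epsilon\downarrow 0$ shows $A_t\to 0$ a.s.

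The main obstacle I anticipate is the promotion of the conditional-on-$v_0$ convergences (where Theorem~\ref{thm:gaussianmean} directly applies) to unconditional almost-sure statements. For the filter mean this is essentially automatic via the linear decomposition and the tower property, but for the weak merging one must dominate a conditional expectation of a modulus-of-continuity term by something tractable; the a.s.\ boundedness of the martingale $\mathbb{E}[\|v_0\|\mid\mathcal{Y}_t]$, available under the mere assumption of square-integrability of $v_0$, combined with the deterministic decay $\|\Psi^\Sigma_t\|\to 0$, is what closes the argument.
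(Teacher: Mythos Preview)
Your argument is correct, and it reaches the same key identity as the paper --- namely $\mathbb{E}[x_t\mid\mathcal{Y}_t]=X^{m_*,\Sigma}_t+\Psi^{\Sigma}_t\,\mathbb{E}[v_0\mid\mathcal{Y}_t]$ (the paper's $\tilde m_t+(\Phi_t+S_t)\mathbb{E}[v_0\mid\mathcal{Y}_t]$, with $\Phi_t+S_t=\Psi^{\Sigma}_t$) --- but by a genuinely different route. The paper, following Makowski and Ocone--Pardoux, introduces a Girsanov change of measure under which $v_0$ becomes independent of the driving noise, then computes $\pi_t$ explicitly via the Kallianpur--Striebel formula and an auxiliary Kalman filter for the augmented state $(\bar x_t,b_t)$; the identity for the conditional mean then drops out of this representation. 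You instead obtain it in two lines by the tower property and the affine dependence of the Kalman mean on its initialisation, avoiding the change of measure entirely. The more substantial divergence is in the step $X^{m_*,\Sigma}_t-X^{\bar m,\bar P}_t\to 0$: the paper writes the difference against the true (non-Gaussian) innovations process and carries out a $J_1+J_2+J_3$ decomposition, with $J_3$ requiring a further $L_1+L_2$ split and an appeal to \eqref{integrability}; you bypass all of this by applying Theorem~\ref{thm:gaussianmean} under the regular conditional law $\mathbb{P}(\cdot\mid v_0=v)$, where the hypotheses are met because $W$ remains a Brownian motion independent of the (now Gaussian) initial state, and then disintegrating. For the weak merging both proofs use the same modulus-of-continuity and Markov-inequality splitting; you phrase it through the mixture $\pi_t(g)=\mathbb{E}[\Phi_g(X^{m_*,\Sigma}_t+\Psi^{\Sigma}_t v_0,P^{\Sigma}_t)\mid\mathcal{Y}_t]$, the paper through the explicit formula \eqref{eq:exptft}. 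Your approach is shorter and more elementary; what the paper's Girsanov route buys is an explicit integral representation of $\pi_t$ itself (equation \eqref{test}), which is not needed for the present statement but is the natural starting point for finer questions such as rates.
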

\begin{remark}
  The requirement that the initial condition $x_0$ be a sum of a
  Gaussian and a non-Gaussian random variables is not very
  restrictive. One quite large class of random variables that satisfy
  this assumption is as follows: for every $m$-dimensional random
  variable $U$ with finite second moment and a density $f_U$, there is
  a corresponding $x_0$ satisfying the assumptions of the theorem,
  where $x_0$ is defined to be a random variable with density which is
  a solution of $m$-dimensional heat equation initialised with $f_U$.
\end{remark}
\begin{proof}
  The ideas of our proof are motivated by \cite{makowski1986filtering}
  and by those used in the proof of \cite[Theorem
  2.6]{ocone1996asymptotic}, with certain modifications to accommodate
  our model with zero noise.
 
  First, observe that the system given by \eqref{eq1} and
  \eqref{obmod} can also be represented as
  
  \begin{align*}
    x_t&= \bar{x}_t +\Phi_tv_0,\;\; \bar{x}_t=\Phi_t\bar{x}_0\\
    y_t&=\int_{0} ^tC_s\bar{x}_sds+ W_t + \int_{0} ^tC_s\Phi_sv_0ds
  \end{align*}
  Here, $\bar{W}_t:=W_t + \int_{0} ^tC_s\Phi_s v_0ds$ is not a
  Brownian motion with respect to $\mathbb{P}$. Hence we invoke a
  change of measure transformation to find a new probability measure
  $\bar{\mathbb{P}}$, with respect to which $\bar{W}_t$ is a Brownian
  motion. By introducing such a transformation we can use much of the
  analysis related to Gaussian initial conditions with appropriate
  modifications. The authors in \cite{makowski1986filtering} and
  \cite{ocone1996asymptotic} use precisely this idea, to analyse the
  case of non-Gaussian initial conditions in their works.

  Let us begin with defining $Z_t$ for $t>0$ by

  \begin{align*}
    Z_t:=\exp\big(-\int_0 ^{t}(C_s\Phi_s v_0)^TdW_s-\frac{1}{2}\int_0 ^{t} \|C_s\Phi_s v_0\|^2 ds   \big),
  \end{align*}
  and define a new measure $\bar{\mathbb{P}}_{T_0}$ for some fixed
  $T_0>0$ by the Radon-Nikodym derivative

  \begin{align*}
    \frac{d\bar{\mathbb{P}}_{T_0}}{d{\mathbb{P}}}:=Z_{T_0}^{-1}
  \end{align*}
  where $\bar{\mathbb{P}}_{T_0}$ defined as above is a probability
  measure (by \cite[Corollary~3.5.16]{karatzas2012brownian}) and
  equivalent to ${\mathbb{P}}$ for all $T_0<\infty$. Notice that the
  variables $v_0$, $\bar{W}_t$, $\bar{x}_t$ are all mutually
  independent under $\bar{\mathbb{P}}_{T_0}$, and that the
  distribution of $v_0$ remains unchanged. Denoting the expectation
  with respect to $\bar{\mathbb{P}}_{T_0}$ by $\bar{\mathbb{E}}$, we
  see that the expectations with respect to the two probability
  measures are related by (\cite{xiong2008introduction}):

  \begin{align}\label{bayes}
    \mathbb{E}[f(v_0,\bar{x}_t)|\mathcal{Y}_t]=\frac{\bar{\mathbb{E}}[f(v_0,\bar{x}_t)Z_t|\mathcal{Y}_t]}{\bar{\mathbb{E}}[Z_t|\mathcal{Y}_t]}
  \end{align}
  for any bounded measurable function
  $f:\mathbb{R}^m \times \mathbb{R}^m \to \mathbb{R}$. Writing $\pi$
  for the distribution of $v_0$, it is easy see that
 
  \begin{align}\label{bp}
    \bar{\mathbb{E}}[f(v_0,\bar{x}_t)Z_t|\mathcal{Y}_t]=\int_{\mathbb{R}^m}\pi(dx)\int_{\mathbb{R}^{2m}}f(x,r_1) e^{-\frac{1}{2}x^TM_t x + x^T r_2} \eta_t (dr_1,dr_2),
  \end{align}
  where, $M_t:=\int_0 ^t \Phi_s^T C_s ^T C_s \Phi_s ds$,
  $b_t:=\int_0 ^t (C_s\Phi_s)^Td\bar{W}_s$, and $\eta_t$ is the
  conditional distribution of $ \begin{pmatrix}
    \bar{x}_t\\
    {b}_t
  \end{pmatrix}$ given $\mathcal {Y}_t$ under
  $\bar{\mathbb{P}}_{T_0}$. The conditional distribution $\eta_t$ is
  obtained by studying Kalman-Bucy filter in the framework with
  correlated observation and system noises for the extended system,
  $ \begin{pmatrix}
    \bar{x}_t\\
    {b}_t
  \end{pmatrix}$. It is known that the conditional distribution
  $\eta_t$ is again Gaussian \cite{xiong2008introduction}, with mean
  $ \begin{pmatrix}
    \tilde{m}_t\\
    \tilde{b}_t
  \end{pmatrix}$
  and  covariance 
  $ \begin{pmatrix}
    \tilde{P}_t & S_t \\
    S_t^T & Q_t
  \end{pmatrix}$
  given by the following set of equations.
  
  \begin{align}\nonumber
    \tilde{m}_t&=X^{m',P'}_t \textrm{      (solution of \eqref{eq2})},  &\tilde{m}_0 &= m' = \mathbb{E}[\bar{x}_0] \,,\\\nonumber
    \tilde{P}_t&=P^{P'}_t \textrm{      (solution of \eqref{DRE})},     &\tilde{P}_0 &= P' =\mathbb{E}[(\bar{x}_0-\mathbb{E}[\bar{x}_0])(\bar{x}_0-\mathbb{E}[\bar{x}_0])^T]\,,\\\nonumber
    d\tilde{b}_t&=(\Phi_t + S_t)^T C_t ^T (dy_t-C_t\tilde{m}_tdt)\,,   &\tilde{b}_0&=0\,,\\\nonumber
    \dot{Q}_t&=-\Phi_tC_t ^T C_t S_t -S_t ^T C_t ^TC_t \Phi_t- S_t ^T C_t ^T C_t S_t\,,    &Q_0 &=0\,,\\\label{S}
    \dot{S}_t&=A_t S_t-\tilde{P}_tC_t ^T C_t S_t - \tilde{P}_tC_t ^T C_t \Phi_t\,,   &S_0&=0\,.
  \end{align}    
  Back to computing expectations, we use \eqref{bp} in \eqref{bayes}
  to express

  \begin{align}\nonumber
    \mathbb{E}[f(v_0,\bar{x}_t)|\mathcal{Y}_t]&=\frac{\int_{\mathbb{R}^m}\pi(dx)\int_{\mathbb{R}^{2m}}f(x,r_1) e^{-\frac{1}{2}x^TM_t x + x^T r_2} \eta_t (dr_1,dr_2)}{\int_{\mathbb{R}^m}\pi(dx)\int_{\mathbb{R}^{2m}} e^{-\frac{1}{2}x^TM_t x + x^T r_2} \eta_t (dr_1,dr_2)}\\\nonumber
                                              &=\frac{\int_{\mathbb{R}^m}e^{\frac{1}{2}x^T(Q_t-M_t) x + x^T \tilde{b}_t}\pi(dx)\int_{\mathbb{R}^{2m}}f(x,r_1)  \tilde{\eta}_t (dr_1,dr_2)}{\int_{\mathbb{R}^m}e^{\frac{1}{2}x^T(Q_t-M_t) x + x^T \tilde{b}_t}\pi(dx)\int_{\mathbb{R}^{2m}}  \tilde{\eta}_t (dr_1,dr_2)}\\\label{test}
                                              &=\frac{\int_{\mathbb{R}^m}e^{\frac{1}{2}x^T(Q_t-M_t) x + x^T \tilde{b}_t}\pi(dx)\int_{\mathbb{R}^{2m}}f(x,r_1)  \tilde{\eta}_t (dr_1,dr_2)}{\int_{\mathbb{R}^m}e^{\frac{1}{2}x^T(Q_t-M_t) x + x^T \tilde{b}_t}\pi(dx)}
  \end{align}
  where $\tilde{\eta}_t$ is a Gaussian measure with mean
  $ \begin{pmatrix}
    \tilde{m}_t+ S_t x\\
    \tilde{b}_t +Q_t x
  \end{pmatrix}$ and covariance $ \begin{pmatrix}
    \tilde{P}_t & S_t \\
    S_t^T & Q_t
  \end{pmatrix}$. Setting
  $f(v_0, \bar{x}_t)= \tilde{f}(\Phi_tv_0 +\bar{x}_t)$, and taking
  $\gamma_t$ to be a Gaussian measure with mean $0$ and covariance
  $\tilde{P}_t$, we have
 
  \begin{align}
    \mathbb{E}[\tilde{f}(\Phi_tv_0 +\bar{x}_t)|\mathcal{Y}_t] &=\frac{\int_{\mathbb{R}^m}e^{\frac{1}{2}x^T(Q_t-M_t) x + x^T \tilde{b}_t}\pi(dx)\int_{\mathbb{R}^{2m}}\tilde{f}(\Phi_t x +r_1)  \tilde{\eta}_t (dr_1,dr_2)}{\int_{\mathbb{R}^m}e^{-\frac{1}{2}x^T(Q_t-M_t) x + x^T \tilde{b}_t}\pi(dx)} \nonumber \\
                                                              &=\frac{\int_{\mathbb{R}^m}e^{\frac{1}{2}x^T(Q_t-M_t) x + x^T \tilde{b}_t}\pi(dx)\int_{\mathbb{R}^{m}}\tilde{f}(\Phi_t x +\tilde{m}_t+ S_t x + r_3)  \gamma_t (dr_3) }{\int_{\mathbb{R}^m}e^{\frac{1}{2}x^T(Q_t-M_t) x + x^T \tilde{b}_t}\pi(dx)} \,. \label{eq:exptft}
 \end{align}
 Now setting $\tilde{f}(x)=x$ (this can be done even though
 $\tilde{f}$ is not bounded because $\tilde{f}$ is integrable with
 respect to Gaussian measure), we obtain the conditional mean as

 \begin{align*}
   \mathbb{E}[x_t|\mathcal{Y}_t] = \tilde{m}_t+ \mathbb{E}[(\Phi_t+ S_t )v_0|\mathcal {Y}_t]
   = \tilde{m}_t+ (\Phi_t+ S_t ) \mathbb{E}[v_0|\mathcal {Y}_t]
 \end{align*}
 Now observe from \eqref{S} that

 \begin{align*}
   \frac{d}{dt} {(\Phi_t+S_t)}=(A_t-\tilde{P}_tC_t ^T C_t)(\Phi_t+S_t),
 \end{align*}
 which has the same form as \eqref{hom}, and thus from Theorem
 \eqref{mainth} it follows that $\|\Phi_t +S_t \| \rightarrow 0$ as
 $t\rightarrow \infty$.
 
 \begin{align*}
   \|\mathbb{E}[x_t|\mathcal{Y}_t]- \tilde{m}_t\| &= \|(\Phi_t+ S_t )\mathbb{E}[v_0|\mathcal {Y}_t]\|\\
                                                  &\leq K_0 \|(\Phi_t+ S_t )\| \quad \mathbb{P}-a.s.\\
                                                  &\xrightarrow[]{t \to \infty} 0 \quad \mathbb{P}-a.s.\,, 
 \end{align*}
 because $\mathbb{E}[v_0|\mathcal{Y}_t]$ is uniformly integrable
 (square integrable, in particular). Therefore,

 \begin{align*}
   \mathbb{E}[x_t|\mathcal{Y}_t]-\tilde{m}_t\rightarrow 0 \;\;\mathbb{P}-a.s. \;\;and\;\; in\;\; L^2
 \end{align*}
 Now, if we can prove that
 $(X^{\bar{m},\bar{P}}_t-\tilde{m}_t)\rightarrow 0,\;\;
 \mathbb{P}-a.s$ then we shall have shown that

 \begin{align*}
   \mathbb{E}[x_t|\mathcal{Y}_t]-X^{\bar{m},\bar{P}}_t\rightarrow 0,\;\; \mathbb{P}-a.s
 \end{align*}
 To that end, consider

 \begin{align*}
   d(\tilde{m}_t-X^{\bar{m},\bar{P}}_t)&=(A_t-P^{\bar{P}}_t C_t^TC_t)(\tilde{m}_t-X^{\bar{m},\bar{P}}_t)dt+ (\tilde{P}_t-P^{\bar{P}}_t)C_t^T(dy_t-C_t \mathbb{E}[x_t|\mathcal{Y}_t])\\
                                       & + (\tilde{P}_t-P^{\bar{P}}_t)C_t^TC_t(\mathbb{E}[x_t|\mathcal{Y}_t]-\tilde{m}_t)dt
 \end{align*}
 whose solution can be expressed as

 \begin{align*}
   \tilde{m}_t-X^{\bar{m},\bar{P}}_t &= \Psi ^{\bar{P}}_t(\tilde{m}_0-\bar{X}_0) + \int_0 ^t \Psi ^{\bar{P}}_t\big(\Psi ^{\bar{P}}_s\big)^{-1}(\tilde{P}_s-P^{\bar{P}}_s)C_s^T (dy_s-C_s \mathbb{E}[x_s|\mathcal{Y}_s])\\ 
                                     &+ \int_0^t \Psi ^{\bar{P}}_t\big(\Psi ^{\bar{P}}_s\big)^{-1}(\tilde{P}_s-P^{\bar{P}}_s)C_s^TC_s(\mathbb{E}[x_s|\mathcal{Y}_s]-\tilde{m}_s)ds\\
                                     &= J_1 +J_2+J_3,
 \end{align*}
 where $J_1=\Psi ^{\bar{P}}_t(\tilde{m}_0-\bar{X}_0)$,
 $J_2=\int_0 ^t \Psi ^{\bar{P}}_t\big(\Psi
 ^{\bar{P}}_s\big)^{-1}(\tilde{P}_s-\bar{P}_s)C_s^T (dy_s-C_s
 \mathbb{E}[x_s|\mathcal{Y}_s])$, and
 $J_3=\int_0^t \Psi ^{\bar{P}}_t\big(\Psi
 ^{\bar{P}}_s\big)^{-1}(\tilde{P}_s-\bar{P}_s)C_s^TC_s(\mathbb{E}[x_s|\mathcal{Y}_s]-\tilde{m}_s)ds$.
 In view of Theorem \ref{mainth}, it is easy to check that
$J_1 \rightarrow 0$ and $J_2 \rightarrow 0\;\ \mathbb{P}-a.s$. Thus,
consider

\begin{align*}
  J_3&= \int_0^t \Psi ^{\bar{P}}_t\big(\Psi ^{\bar{P}}_s\big)^{-1}(\tilde{P}_s-P^{\bar{P}}_s)C_s^TC_s(\mathbb{E}[x_s|\mathcal{Y}_s]-\tilde{m}_s)ds\\
     &=\Psi ^{\bar{P}}_t(P'-\bar{P})  \int_0^t \big(\Psi ^{P'}_s\big)^TC_s^TC_s \Psi ^{P'}_s\mathbb{E}[v_0|\mathcal{Y}_s]ds\\
     &= \Psi ^{\bar{P}}_t(P'-\bar{P})  \int_0^t \big(\Psi ^{P'}_s\big)^TC_s^TC_s \Psi ^{P'}_s(\mathbb{E}[v_0|\mathcal{Y}_s]-\mathbb{E}[v_0|\mathcal{Y}_\infty])ds +\Psi ^{\bar{P}}_t(P'-\bar{P})  \int_0^t \big(\Psi ^{P'}_s\big)^TC_s^TC_s \Psi ^{P'}_s ds \, \mathbb{E}[v_0|\mathcal{Y}_\infty] \\
     &= L_1 +L_2,
\end{align*} 
where, 

\begin{align*}
  L_1 &=\Psi ^{\bar{P}}_t(P'-\bar{P})  \int_0^t \big(\Psi ^{P'}_s\big)^TC_s^TC_s \Psi ^{P'}_s(\mathbb{E}[v_0|\mathcal{Y}_s]-\mathbb{E}[v_0|\mathcal{Y}_\infty])ds\\
  L_2&=\Psi ^{\bar{P}}_t(P'-\bar{P})  \int_0^t \big(\Psi ^{P'}_s\big)^TC_s^TC_s \Psi ^{P'}_s ds \, \mathbb{E}[v_0|\mathcal{Y}_\infty] 
\end{align*}
Again, using the uniform bound on $C_s$ and theorem~\ref{mainth}, it
is clear that $L_2 \rightarrow 0\;\ \mathbb{P}-a.s$. In order to show
that $L_1 \rightarrow 0 \;\ \mathbb{P}-a.s$, It suffices to show that
$\|\int_0^t \big(\Psi ^{P'}_s\big)^TC_s^TC_s \Psi
^{P'}_s(\mathbb{E}[v_0|\mathcal{Y}_s]-\mathbb{E}[v_0|\mathcal{Y}_\infty])ds\|
<\infty$. To that end, we know that for a given $\epsilon>0$, there is
a $t_{\epsilon} >0$ such that for every $t>t_{\epsilon}$,
$\|\mathbb{E}[v_0|\mathcal{Y}_s]-\mathbb{E}[v_0|\mathcal{Y}_\infty]\|<\epsilon$.

\begin{align*}
  \left\|\int_0^t \big(\Psi ^{P'}_s\big)^TC_s^TC_s \Psi ^{P'}_s(\mathbb{E}[v_0|\mathcal{Y}_s]-\mathbb{E}[v_0|\mathcal{Y}_\infty])ds\right\|&\leq  \left\|\int_0^{t_{\epsilon}} \big(\Psi ^{P'}_s\big)^TC_s^TC_s \Psi ^{P'}_s(\mathbb{E}[v_0|\mathcal{Y}_s]-\mathbb{E}[v_0|\mathcal{Y}_\infty])ds \right\|\\
                                                                                                                                           & + \left\| \int_{t_{\epsilon}}^t \big(\Psi ^{P'}_s\big)^TC_s^TC_s \Psi ^{P'}_s(\mathbb{E}[v_0|\mathcal{Y}_s]-\mathbb{E}[v_0|\mathcal{Y}_\infty])ds \right\|\\
                                                                                                                                           &< \infty\;\;\textrm{uniformly in $t$, by \eqref{integrability}}
\end{align*}
Therefore, the above calculation implies that
$(X^{\bar{m},\bar{P}}_t-\tilde{m}_t)\rightarrow 0,\;\;
\mathbb{P}-a.s$. This concludes the proof of \eqref{1s}.

We again follow the method of \cite{ocone1996asymptotic} to next prove
\eqref{2s}. To this end, consider the optimal filtering distribution
$\pi _t$ (recall
$\pi_t(B)=\mathbb{E}[\mathsf{1}_{x_t \in B}|\mathcal{Y}_t]$) and the
Gaussian $\tilde{\mu} _t:=\mathcal{N}(\tilde{m}_t,\tilde {P}_t)$. For
a bounded uniformly continuous function
$g:\mathbb{R}^n \rightarrow \mathbb{R}$, using the expression from
\eqref{eq:exptft},

\begin{align}
  \int_{\mathbb{R}^n} g(x) \pi_t(dx)-&\int_{\mathbb{R}^n} g(x)
                                       \tilde{\mu} _t (dx) \nonumber \\
  &=\frac{\int_{\mathbb{R}^m}e^{\frac{1}{2}x^T(Q_t-M_t) x + x^T \tilde{b}_t}\pi(dx)\int_{\mathbb{R}^{m}}g(\Phi_t x +\tilde{m}_t+ S_t x + r_3)  \gamma_t (dr_3) }{\int_{\mathbb{R}^m}e^{\frac{1}{2}x^T(Q_t-M_t) x + x^T \tilde{b}_t}\pi(dx)}-\int_{\mathbb{R}^n} g(x) \tilde{\mu}_t (dx) \nonumber \\
  \label{unic} &=\frac{\int_{\mathbb{R}^m}e^{\frac{1}{2}x^T(Q_t-M_t) x + x^T \tilde{b}_t}\pi(dx)\int_{\mathbb{R}^{m}}\left[g(\Phi_t x +\tilde{m}_t+ S_t x + r_3)-g(\tilde{m}_t+ S_t r_3) \right] \gamma_t (dr_3) }{\int_{\mathbb{R}^m}e^{\frac{1}{2}x^T(Q_t-M_t) x + x^T \tilde{b}_t}\pi(dx)} \,,
\end{align}
where the last line is obtained by using definition of $\gamma_t(dx)$
and by multiplying and the second term in the second line above by
$\int_{\mathbb{R}^m}e^{\frac{1}{2}x^T(Q_t-M_t) x + x^T
  \tilde{b}_t}\pi(dx)$. Now, if we partition the $\pi(dx)$ integral
into regions $|(\Phi_t +S_t)x|<\delta$ and
$|(\Phi_t +S_t)x|\geq\delta$ for a fixed $\delta >0$, then

\begin{align*}
  \int_{\mathbb{R}^n} g(x) \pi_t(dx)-&\int_{\mathbb{R}^n} g(x) \tilde{\mu}_t (dx)\\&\leq \sup_{|z_1-z_2|<\delta }{|g(z_1)-g(z_2)|}+ 2\sup_{z}{(g(z))}\mathbb{E}[\mathsf{1}_{|(\Phi_t+S_t)x_0|>\delta}|\mathcal{Y}_t]\\
                                     &\leq \sup_{|z_1-z_2|<\delta }{|g(z_1)-g(z_2)|}+ \frac{2\sup_{z}{(g(z))}}{\delta^2}\|\Phi_t +S_t\|^2 \mathbb{E}[|x_0|^2]\\
                                     &\leq  \sup_{|z_1-z_2|<\delta }{|g(z_1)-g(z_2)|} \;\;\textrm{as}\;\; t\to \infty 
\end{align*}
where the second inequality follows from Chebyshev's
inequality. Observe now that for any $\delta_0 >0$, we can choose
sufficiently small $\delta$, such that
$\sup_{|z_1-z_2|<\delta }{|g(z_1)-g(z_2)|}<\delta_0$ which implies
that $\pi_t$ converges weakly to $\tilde{\mu}_t \,\, \mathbb{P}-a.s.$
as $t\to\infty$. Using now the fact that
$(X^{\bar{m},\bar{P}}_t-\tilde{m}_t)\rightarrow 0,\;\; \mathbb{P}-a.s$
and $(P^{\bar{P}}_t-\tilde{P}_t)\rightarrow 0$, we conclude that
$[\pi_t(g)-\mathcal{N}(X^{\bar{m},\bar{P}}_t,P^{\bar{P}}_t)(g)]\xrightarrow[]
{t \to \infty} 0,\;\; \mathbb{P}-a.s$.
\end{proof}
\begin{remark}
  We have shown that the optimal filter is asymptotically proximal to
  the Gaussian distribution with mean and covariance given by
  solutions of \eqref{eq2} and \eqref{DRE} for arbitrary initial
  conditions.  In contrast to the results in
  \cite{ocone1996asymptotic}, our methods are not sufficient to prove
  the exponential convergence in our case of zero system noise.
\end{remark}

\section{Small noise analysis}\label{S5}

In this section, we would like to study the small system noise
behaviour of linear filter. We initialise the system with noise and
zero noise system with same initial conditions and study the behaviour
of both solutions for same set of observations. Consider the processes
given by,

\begin{align}\label{epssystem}
  x^{\epsilon}_t&=x^{\epsilon}_{0}+\int_{0} ^t A_sx^{\epsilon}_sds +\epsilon\int_0 ^t F_s dV_s^{\epsilon},\\ \label{epsobserv}
  y^{\epsilon}_t&=\int_{0} ^tC_sx^{\epsilon}_sds+\int_{0} ^t R^{\frac{1}{2}}_s dW_s ^{\epsilon},\\\nonumber
  x^{\epsilon}_{0}&\sim\mathcal{N}(m_0,P_0)\;\; t\geq 0,
\end{align}
where, $V_t^{\epsilon}$ and $W_t^{\epsilon}$ are mutually independent
$\mathcal{F}_t$-standard Brownian motion. Also, $x_0$ is mutually
independent with respect to $V_t^{\epsilon}$ and $W_t^{\epsilon}$.

\begin{align}
  d\hat{x}^{\epsilon}_t&=A_t\hat{x}^{\epsilon}_tdt+Q^{\epsilon}_t C^T _t R^{-1}_t(dy^{\epsilon}_t-C_t\hat{x}^{\epsilon}_tdt), \\
  \dot{Q}^{\epsilon}_t&= A_tQ^{\epsilon}_t +Q^{\epsilon}_tA^T _t -Q^{\epsilon}_tC^T _t R^{-1} _t C_tQ^{\epsilon}_t +\epsilon ^2 F_t F_t ^T,\;\ \\
  \hat{x}^{\epsilon}_{0}&=m,  \;\;\ Q^{\epsilon}_{0}=Q,
\end{align}   
where,
$\hat{x}^{\epsilon}_t=\mathbb{E}[x^{\epsilon}_t|\mathcal{Y}^{\epsilon}_t]$
with
$\mathcal{Y}^{\epsilon}_t:=\sigma\{ y^{\epsilon}_s:0\leq s\leq t \}]$
and
$Q^{\epsilon}_t=\mathbb{E}[(x^{\epsilon}_t-\hat{x}^{\epsilon}_t)(x^{\epsilon}_t-\hat{x}^{\epsilon}_t)^T]$. We also define the new process $\hat{x}^0_t$ as
\begin{align*}
d\hat{x}^{0}_t&=A_t\hat{x}^{0}_tdt+P^Q_t C^T _t R^{-1}_t(dy^{\epsilon}_t-C_t\hat{x}^{0}_tdt).
\end{align*} 
Note that above definition involves $y^{\epsilon}_t$, instead of $y^0_t$. To proceed further, additional assumption is made in this analysis.
\begin{asu}\label{expo}
  $F_t$ is uniformly bounded in $t$ and
  $\dot{z}_t=(A_t-P^{Q}_tC_t^T R_t^{-1} C_t)z_t$ is exponentially
  stable.
\end{asu}
Sufficient conditions for the required exponential stability are given
in \cite{ni2016stability}.
\begin{thm}
  If $[A_t,C_t]$ are uniformly completely observable and assumption
  \eqref{expo} holds, then
  \begin{align*}
  \mathbb{P}\big(\lim _{\epsilon\to
    0}||\hat{x}^{\epsilon}_t-\hat{x}^0_t||=0, \forall t\geq 0\big)=1
\end{align*}   
  
\end{thm}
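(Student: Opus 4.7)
The plan is to adapt the innovations-process approach from the proof of Theorem~\ref{thm:gaussianmean}: derive a linear SDE for $e_t := \hat{x}^\epsilon_t - \hat{x}^0_t$ whose stochastic forcing is controlled by $\|Q^\epsilon_t - P^Q_t\|$, show that this covariance discrepancy is $O(\epsilon^2)$ uniformly in $t$, obtain an $L^2$-bound of order $\epsilon^4$ via Ito's isometry, and finally upgrade to almost sure convergence by Borel--Cantelli. First I would introduce the innovations $d\nu^\epsilon_t := dy^\epsilon_t - C_t \hat{x}^\epsilon_t dt$, which is a $\mathcal{Y}^\epsilon_t$-Brownian motion (with bracket $R_t dt$). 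Subtracting the two filter equations and substituting $dy^\epsilon_t - C_t \hat{x}^0_t dt = d\nu^\epsilon_t + C_t e_t dt$, the $Q^\epsilon_t C^T_t R^{-1}_t C_t e_t$ and $(Q^\epsilon_t - P^Q_t) C^T_t R^{-1}_t C_t e_t$ contributions combine to give the clean equation
\[
de_t = (A_t - P^Q_t C^T_t R^{-1}_t C_t) e_t dt + (Q^\epsilon_t - P^Q_t) C^T_t R^{-1}_t d\nu^\epsilon_t, \qquad e_0 = 0.
\]
The homogeneous drift is precisely that of \eqref{hom} with initial covariance $Q$, so by Assumption~\ref{expo} the two-parameter fundamental matrix $\Psi^Q_{t,s} := \Psi^Q_t (\Psi^Q_s)^{-1}$ satisfies $\|\Psi^Q_{t,s}\| \leq K e^{-\alpha(t-s)}$.

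The main analytic step is to control $\Delta_t := Q^\epsilon_t - P^Q_t$ uniformly in $t$. Subtracting the two Riccati equations and rewriting $Q^\epsilon C^T R^{-1} C Q^\epsilon - P^Q C^T R^{-1} C P^Q = P^Q C^T R^{-1} C \Delta + \Delta C^T R^{-1} C Q^\epsilon$ gives the Lyapunov-type equation
\[
\dot{\Delta}_t = (A_t - P^Q_t C^T_t R^{-1}_t C_t) \Delta_t + \Delta_t (A_t - Q^\epsilon_t C^T_t R^{-1}_t C_t)^T + \epsilon^2 F_t F_t^T, \qquad \Delta_0 = 0,
\]
whose variation-of-constants solution is $\Delta_t = \epsilon^2 \int_0^t \Psi^Q_{t,s} F_s F_s^T (\tilde{\Psi}^{Q^\epsilon}_{t,s})^T ds$. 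To bound this by $K_1 \epsilon^2$ uniformly in $t$, I would first verify that $Q^\epsilon_t$ is uniformly bounded in $t$ (extending Lemma~3.2 of~\cite{ni2016stability} to the Riccati equation with source term $\epsilon^2 F_t F_t^T$ by a comparison argument, using that $F_t$ is bounded); this makes the feedback gain $Q^\epsilon_t C^T_t R^{-1}_t$ uniformly bounded, so Lemma~\ref{ucoo} renders $[A_t - Q^\epsilon_t C^T_t R^{-1}_t C_t, C_t]$ uniformly completely observable, and the Lyapunov argument of Theorem~\ref{mainth} then yields exponential decay of $\tilde{\Psi}^{Q^\epsilon}_{t,s}$ with constants independent of $\epsilon$.

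With $\|\Delta_t\| \leq K_1 \epsilon^2$ uniformly in $t$, the explicit solution $e_t = \int_0^t \Psi^Q_{t,s} \Delta_s C^T_s R^{-1}_s d\nu^\epsilon_s$ combined with Ito's isometry (remembering the bracket $R_s ds$ of $\nu^\epsilon$) gives $\mathbb{E}\|e_t\|^2 \leq K_2 \epsilon^4$ uniformly in $t$, using exponential decay of $\Psi^Q_{t,s}$ and uniform boundedness of $C_s$, $R^{-1}_s$. Choosing $\epsilon_n = 1/n$, Chebyshev's inequality gives $\mathbb{P}(\|e_t(\epsilon_n)\| > \delta) \leq K_2/(n^4\delta^2)$, so Borel--Cantelli delivers $\|e_t(\epsilon_n)\| \to 0$ almost surely for each fixed $t$; a countable-dense-in-$t$ argument together with sample-path continuity of $\hat{x}^{\epsilon_n}_t$ and $\hat{x}^0_t$ then produces a single full-measure event on which the convergence holds for all $t \geq 0$. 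I expect the main obstacle to be the uniform-in-$\epsilon$ exponential stability of the perturbed matrix $\tilde{\Psi}^{Q^\epsilon}$: it is this, rather than the bare exponential stability given by Assumption~\ref{expo}, that prevents $\|\Delta_t\|$ from growing in $t$ and is what ultimately makes the $\epsilon$-dependent constants in the final bound behave correctly.
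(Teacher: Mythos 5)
Your overall architecture matches the paper's (error SDE for $e_t$ forced by $\Delta_t:=Q^\epsilon_t-P^Q_t$, a uniform $O(\epsilon^2)$ bound on $\Delta_t$, a martingale estimate, then Borel--Cantelli along a sequence $\epsilon_n$), but the step you yourself flag as the main obstacle is a genuine gap, and the way you propose to close it does not work. You write the Riccati difference as an asymmetric Lyapunov equation with drift $A_t-P^Q_tC^T_tR^{-1}_tC_t$ on the left and $A_t-Q^\epsilon_tC^T_tR^{-1}_tC_t$ on the right, so your variation-of-constants formula involves the $\epsilon$-dependent flow $\tilde{\Psi}^{Q^\epsilon}_{t,s}$, and you need its exponential decay \emph{uniformly in} $\epsilon$. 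The Lyapunov argument of Theorem~\ref{mainth} (via Lemma~\ref{ucoo}) only yields asymptotic stability and the integrability bound \eqref{eqn:Psi-decay}, not an exponential rate; to extract an exponential rate from that argument one needs $V(z,t)\le c\|z\|^2$ with $c$ a uniform bound on $(Q^\epsilon_t)^{-1}$, i.e.\ a positive lower bound on $Q^\epsilon_t$, and precisely this degenerates as $\epsilon\to 0$ in the noise-free setting (this is why the paper must \emph{assume} exponential stability of the $P^Q$-flow in Assumption~\ref{expo} rather than derive it). Trying instead to treat $\tilde{\Psi}^{Q^\epsilon}$ as a perturbation of $\Psi^Q$ of size $O(\|\Delta\|)$ is circular, since the bound on $\Delta$ is what you are trying to prove.

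The paper sidesteps this entirely: it keeps the Riccati difference in the symmetric form
\begin{align*}
\dot{\Delta}_t=B^Q_t\Delta_t+\Delta_t\big(B^Q_t\big)^T-\Delta_tC^T_tR^{-1}_tC_t\Delta_t+\epsilon^2F_tF_t^T,
\end{align*}
applies variation of constants with the \emph{unperturbed} flow $\Psi^Q$ on both sides, so the quadratic term $-\Delta_sC^T_sR^{-1}_sC_s\Delta_s$ survives inside the integral, and then simply discards it because it is negative semidefinite. This yields $\Delta_t\le\epsilon^2\Psi^Q_t\int_0^t(\Psi^Q_s)^{-1}F_sF_s^T(\Psi^Q_s)^{-T}ds\,(\Psi^Q_t)^T$ using only Assumption~\ref{expo}, with no property of the $\epsilon$-perturbed flow required. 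If you adopt this one change, the rest of your argument goes through; two smaller points to tighten are (i) your ``countable dense set plus path continuity'' upgrade to convergence for all $t$ requires equicontinuity in $n$ that you have not established --- the paper instead applies Doob's inequality to the martingale $(\Psi^Q_t)^{-1}e_t$ to control $\sup_{0\le t\le\bar T}$ directly --- and (ii) both your argument and the paper's prove convergence only along a chosen sequence $\epsilon_n\to 0$, not for the full limit $\epsilon\to 0$ as stated.
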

\begin{proof}
  Let us begin with observing

  \begin{align}
    & \frac{d}{dt}(Q^{\epsilon}_t-{P}^{Q}_t)\\
    &= B^Q _t(Q^{\epsilon}_t-{P}^{Q}_t) +(Q^{\epsilon}_t-{P}^{Q}_t)\big(B^Q _t\big)^T -(Q^{\epsilon}_t-{P}^{Q}_t)C^T _t R^{-1} _t C_t(Q^{\epsilon}_t-{P}^{Q}_t) +\epsilon ^2 F_t F_t ^T,\;\ Q^{\epsilon}_0-{P}^{Q}_0=0,
  \end{align}
  where, $B^Q _t:=A_t-P^{Q}_tC_t^T R_t^{-1} C_t$. If we define
  $\Delta P _t :=Q^{\epsilon}_t-{P}^{Q}_t$ and $\Psi ^Q _t$ is such
  that $\dot{\Psi }^Q_t= B^Q_t \Psi^Q_t,\;\;\Psi^Q_0=I$,

  \begin{align*}
    \frac{d}{dt}[\big(\Psi^Q_t\big)^{-1} \Delta P_t \big(\Psi^Q_t\big)^{-T}]&=\big(\Psi^Q_t\big)^{-1}(-\Delta P_t C^T _t R^{-1} _t C_t\Delta P_t +\epsilon ^2 F_t F_t ^T ) \big(\Psi^Q_t\big)^{-T}\\
    \Delta P_t &=\Psi^Q_t \int_0 ^t \big(\Psi^Q_s\big)^{-1}(-\Delta P_s C^T _s R^{-1} _s C_s\Delta P_s +\epsilon ^2 F_s F_s ^T ) \big(\Psi^Q_s\big)^{-T}ds \big(\Psi^Q_t\big)^T\\
                                                                            &\leq \epsilon ^2 \Psi^Q_t \int_0 ^t \big(\Psi^Q_s\big)^{-1} F_s F_s ^T  \big(\Psi^Q_s\big)^{-T}ds \big(\Psi^Q_t\big)^T
  \end{align*}
  From the assumption of exponential stability, we have
  $\|\Psi^Q_t \big(\Psi^Q_s\big)^{-1}\|\leq K e ^{-\alpha (t-s)}$, for
  some $K$, $\alpha$ and for all $t\geq s\geq 0$. Therefore,

  \begin{align*}
    0\leq\|\Delta P_t\|\leq \frac{\epsilon ^2 K F}{2\alpha}
  \end{align*}
  Now, we consider the evolution equation for
  $\hat{x}^{\epsilon}_t-\hat{x}^0_t$

  \begin{align*}
    d(\hat{x}^{\epsilon}_t-\hat{x}^0_t)&=B^Q_t (\hat{x}^{\epsilon}_t-\hat{x}^0_t)dt + (\Delta P_t)C^T _t R^{-1}_t (dy^{\epsilon}_t-C_t \hat{x}^{\epsilon}_tdt),\;\; \hat{x}_0 -\hat{x}^{\epsilon}_0=0,\\
    \hat{x}^{\epsilon}_t-\hat{x}^0_t&= \int_0 ^t \Psi^Q _t \big(\Psi^Q_s\big)^{-1} (\Delta P_s)C^T _s R^{-1}_s (dy^{\epsilon}_s-C_s \hat{x}^{\epsilon}_sds)\\
  \end{align*}
  Define,
  $u_t:= \int_0 ^t \big(\Psi^Q_s\big)^{-1} (\Delta P_s)C^T _s R^{-1}_s
  (dy^\epsilon _s-C_s \hat{x}^{\epsilon}_sds) $ and
  $\mathcal{B}_t:=\sigma \{y^\epsilon _r-\int_0^r C_s \hat{x}^{\epsilon}_sds :
  0\leq r \leq t \}$. Clearly, $u_t$ is a
  $\mathcal{B}_t$-martingale. For any given $\bar{T}\geq 0$ and
  $\lambda>0$, applying Doob's inequality to submartingale, $|u_t|$,
  we have

  \begin{align*}
    \mathbb{P}\big(\sup_{0\leq t\leq \bar{T}}|u_t|\geq\lambda \big)&\leq \frac{\mathbb{E}[|u_{\bar{T}}|]}{\lambda}\\
    \mathbb{P}\big(\sup_{0\leq t\leq \bar{T}}|\big(\Psi^Q_t\big)^{-1}(\hat{x}^{\epsilon}_t-\hat{x}^0_t)|\geq\lambda \big)&\leq \frac{\mathbb{E}[|\big(\Psi^Q_{\bar{T}}\big)^{-1}(\hat{x}^{\epsilon}_{\bar{T}}-\hat{x}^0_{\bar{T}})|]}{\lambda}\leq \|\big(\Psi^Q_{\bar{T}}\big)^{-1}\| \frac{\epsilon \sqrt{KFM}}{2\alpha \lambda}\\
    \mathbb{P}\big(\|\big(\Psi^Q_{\bar{T}}\big)^{-1}\|\sup_{0\leq t\leq \bar{T}}|(\hat{x}^{\epsilon}_t-\hat{x}^0_t)|\geq \lambda\big)&\leq \|\big(\Psi^Q_{\bar{T}}\big)^{-1}\| \frac{\epsilon \sqrt{KFM}}{2\alpha \lambda}s
  \end{align*}
  Now, choose $\epsilon= \frac{1}{n^2},\;\; n\in \mathbb{N}$ and
  $\lambda = \lambda_0 K e^ {\alpha \bar{T}}$ (arbitrariness of
  $\lambda$ is now in $\lambda_0$),

  \begin{align*}
    \mathbb{P}\big(K e^ {\alpha \bar{T}}\sup_{0\leq t\leq \bar{T}}|(\hat{x}^{\epsilon}_t-\hat{x}^0_t)|\geq \lambda_0 K e^ {\alpha \bar{T}}\big)&\leq K e^ {\alpha \bar{T}} \frac{ \sqrt{KFM}}{2 n^2 \alpha \lambda_0K e^ {\alpha \bar{T}}}\\
    \mathbb{P}\big(\sup_{0\leq t\leq \bar{T}}|(\hat{x}^{\epsilon}_t-\hat{x}^0_t)|\geq \lambda_0 \big)&\leq  \frac{\epsilon \sqrt{KFM}}{2\alpha n^2 \lambda_0}
  \end{align*}
  Then, using Borel-Cantelli lemma, we conclude that
  $\mathbb{P}\big(\lim _{n\to
    \infty}|\hat{x}^{\frac{1}{n^2}}_t-\hat{x}^0_t|=0, \forall
  \bar{T}\geq t\geq 0\big)=1, \; \forall \bar{T}\geq 0$. Therefore, the small noise
  limits are non-singular in the case of Gaussian initial condition.
\end{proof}
\section*{Acknowledgements}

ASR and AA would like to thank Amarjit Budhiraja and Eric S.~Van Vleck for valuable discussions. ASR and AA would also like to thank The Statistical and Applied Mathematical Sciences Institute (SAMSI), Durham, NC, USA where a part of the work was completed. ASR's visit to SAMSI was supported by Infosys Foundation Excellence Program of ICTS. SV and AA acknowledge generous support of the AIRBUS Group Corporate Foundation Chair in Mathematics of Complex Systems.
\bibliographystyle{siam}
\bibliography{reddy2018asymptotic}

\end{document}